\newtheorem{theorem}{Theorem}
\newtheorem{conjecture}{Conjecture}
\newtheorem{lemma}{Lemma}
\begin{document}
\baselineskip=17pt

\AddToShipoutPictureBG*{
  \AtPageUpperLeft{
  \hspace{\paperwidth}
   \raisebox{-4\baselineskip}{
   \makebox[-180pt][r]{This is the author's version of the paper. The final publication has appeared in}
     \raisebox{-1.2\baselineskip}{
      \makebox[-211pt][r]{Ramanujan J., \textbf{59}, 2, (2022), 571 -- 607.}}}}}

\title{\bf A ternary diophantine inequality by primes with one of the form $\mathbf{p=x^2+y^2+1}$}

\author{\bf S. I. Dimitrov}

\date{}

\maketitle

\begin{abstract}
In this paper we solve the ternary  Piatetski-Shapiro inequality with prime numbers of a special form.
More precisely we show that, for any fixed $1<c<\frac{427}{400}$, every sufficiently large positive number $N$
and a small constant $\varepsilon>0$, the diophantine inequality
\begin{equation*}
|p_1^c+p_2^c+p_3^c-N|<\varepsilon
\end{equation*}
has a solution in prime numbers $p_1,\,p_2,\,p_3$, such that $p_1=x^2 + y^2 +1$.
For this purpose we establish a new Bombieri -- Vinogradov type result for exponential sums over primes.\\
\quad\\
\textbf{Keywords}:  Diophantine  inequality $\cdot$ Exponential sum $\cdot$
Bombieri -- Vinogradov type result $\cdot$ Primes.\\
\quad\\
{\bf  2020 Math.\ Subject Classification}: 11D75  $\cdot$ 11L07 $\cdot$  11L20  $\cdot$  11P32
\end{abstract}

\section{Introduction and statement of the result}
\indent

In 1960 Linnik \cite{Linnik} showed that there exist infinitely many prime numbers of the form
$p=x^2 + y^2 +1$, where $x$ and $y$ are integers.
More precisely he proved the asymptotic formula
\begin{equation*}
\sum_{p\leq X}r(p-1)=\pi\prod_{p>2}\bigg(1+\frac{\chi_4(p)}{p(p-1)}\bigg)\frac{X}{\log X}+\mathcal{O}\bigg(\frac{X(\log\log X)^5}{(\log X)^{1+\theta_0}}\bigg)\,,
\end{equation*}
where $r(k)$ is the number of solutions of the equation $k=x^2 + y^2$ in integers, $\chi_4(k)$ is the non-principal character modulo 4 and
\begin{equation}\label{theta0}
\theta_0=\frac{1}{2}-\frac{1}{4}e\log2=0.0289...
\end{equation}
In 1992 Tolev \cite{Tolev1} proved that
for any fixed $1<c<\frac{15}{14}$, for every sufficiently large positive number $N$
and a small constant $\varepsilon>0$, the diophantine inequality
\begin{equation}\label{Inequality1}
|p_1^c+p_2^c+p_3^c-N|<\varepsilon
\end{equation}
has a solution in prime numbers $p_1,\,p_2,\,p_3$.

Subsequently the result of Tolev was improved by several authors
\cite{Baker},
\cite{Baker-Weingartner}, \cite{Cai1}, \cite{Cai2}, \cite{Cai3}, \cite{Cao-Zhai}, \cite{Ku-Ne}, \cite{Kumchev}.
The best result up to now belongs to Baker \cite{Baker} with $1<c<\frac{6}{5}$.

Motivated by these results in this paper we solve inequality \eqref{Inequality1} with prime numbers
of a special type. More precisely we shall prove solvability of \eqref{Inequality1} with Linnik primes.
In order to achieve our goal we establish  a new Bombieri -- Vinogradov  type result for exponential sums over primes.

Recall that Siegel-Walfisz and Bombieri -- Vinogradov theorems are extremely important results
in analytic number theory and have various applications.

Siegel-Walfisz theorem is a refinement both of the prime number theorem and of
Dirichlet's theorem on primes in arithmetic progressions.
It states that for any fixed  $A > 0$ there exists a positive constant $c$ depending only on $A$ such that
\begin{equation*}
\sum_{p\le x\atop{p\equiv a\,( \textmd{mod}\, d)}} \log p
= \frac{x}{\varphi(d)}+\mathcal{O}\bigg(\frac{x}{e^{c\sqrt{\log x}}}\bigg)\,,
\end{equation*}
whenever $x\geq2$, $(a,d) = 1$, $d\leq (\log x)^A$ and  $\varphi (n)$ is Euler's function.

The celebrated Bombieri -- Vinogradov theorem concerns the distribution of primes in arithmetic progressions,
averaged over a range of moduli and states the following.
Let $A > 0$ be fixed. Then
\begin{equation*}
\sum\limits_{d\le \sqrt{X}/(\log X)^{A+5}}\max\limits_{y\le X}\max\limits_{(a,\,d)=1}
\Bigg|\sum_{p\le y\atop{p\equiv a\,( \textmd{mod}\, d)}} \log p-\frac{y}{\varphi(d)}\Bigg|\ll\frac{X}{\log^AX}\,.
\end{equation*}
In 2017 Tolev \cite{Tolev2} proved a Siegel-Walfisz type result for exponential sums over primes.
It states the following.
Let $\delta$, $\xi$ and $\mu$ be positive real numbers depending on $c>1$, such that
\begin{equation*}
\xi+3\delta<\frac{12}{25}\,,\quad \mu<1\,.
\end{equation*}
Let $D=X^\delta$ and $\lambda(d)$ are real numbers satisfying
\begin{equation*}
|\lambda(d)|\leq1\,,\quad\lambda(d)=0\;\;\mbox{ if }\;\;2|d\;\;\mbox{ or }\;\;\mu(d)=0\,,
\end{equation*}
where $\mu(d)$ is M\"{o}bius' function.
If
\begin{equation*}
L(t,X)=\sum\limits_{d\le D}\lambda(d)
\sum\limits_{\substack{\mu X<p\le X \\ p+2\equiv 0\,( \textmd{mod}\, d)}} e(t p^c) \log p
\end{equation*}
then for $|t|<X^{\xi-c}$ the asymptotic formula
\begin{equation*}
L(t,X)=\left(\int\limits_{\mu X}^{X}e(ty^c)\,dy\right)\sum\limits_{d\leq D}\frac{\lambda(d)}{\varphi(d)}
+\mathcal{O}\bigg(\frac{X}{\log^AX}\bigg)\,,
\end{equation*}
holds. Here $A>0$ is an arbitrary large constant.

Motivated by these investigations in this paper we establish a new Bombieri -- Vinogradov  type
result for exponential sums over primes.
More precisely we establish the following upper bound.
Let $1<c<3$, $c\neq2$, $0<\mu<1$ and $A>0$ be fixed. Then for $|t|\leq X^{\frac{1}{4}-c}$ the inequality
\begin{equation}\label{Dimrеsult}
\sum\limits_{d\le \sqrt{X}/(\log X)^{2A+10}}\max\limits_{y\le X}\max\limits_{(a,\, d)=1}\Bigg|\sum\limits_{\mu y<p\leq y\atop{p\equiv a\, ( d)}}e(t p^c)\log p
-\frac{1}{\varphi(d)}\int\limits_{\mu y}^{y}e(t x^c)\,dx\Bigg|\ll\frac{X}{\log^AX}
\end{equation}
holds.

Using \eqref{Dimrеsult} as a main weapon we are able to attack the following theorem.
\begin{theorem}\label{Theorem}
Let $1<c<\frac{427}{400}$. For every sufficiently large positive number $N$, the diophantine inequality
\begin{equation*}
|p_1^c+p_2^c+p_3^c-N|<\frac{(\log\log N)^6}{(\log N)^{\theta_0}}
\end{equation*}
has a solution in prime numbers $p_1,\,p_2,\,p_3$, such that $p_1=x^2 + y^2 +1$.
Here $\theta_0$ is defined by \eqref{theta0}.
\end{theorem}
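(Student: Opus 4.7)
The plan is to apply a Davenport–Heilbronn type version of the circle method, with the prime $p_1$ weighted by Linnik's representation function so as to enforce the condition $p_1 = x^2+y^2+1$. Set $X = (N/3)^{1/c}$ and fix $\mu \in (0,1)$ so that any triple of primes satisfying the inequality has all $p_i \in (\mu X, X]$. Introduce
\begin{equation*}
S_1(t) = \sum_{\mu X < p \le X} r(p-1) \log p \cdot e(t p^c), \qquad
S_2(t) = \sum_{\mu X < p \le X} \log p \cdot e(t p^c),
\end{equation*}
and the kernel $K(t) = \bigl(\sin(\pi \varepsilon t)/(\pi t)\bigr)^2$ with $\varepsilon = (\log\log N)^6/(\log N)^{\theta_0}$, whose Fourier transform is a non-negative tent function supported in $[-\varepsilon,\varepsilon]$. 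Then
\begin{equation*}
\mathcal J(N) = \int_{-\infty}^{\infty} S_1(t) S_2(t)^2 K(t) e(-tN) \, dt
\end{equation*}
majorises, up to a constant, the weighted number of solutions, so it suffices to prove a positive lower bound on $\mathcal J(N)$ of the expected order of magnitude.

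Following the standard pattern I would decompose $\mathbb R = \mathfrak M \cup \mathfrak m \cup \mathfrak t$ with $\mathfrak M = \{|t| \le \tau\}$, $\mathfrak m = \{\tau < |t| \le T\}$ and $\mathfrak t = \{|t| > T\}$, choosing $\tau = X^{1/4-c}$ (the range of validity of the new Bombieri–Vinogradov result) and $T$ a small positive power of $X$ to be optimised against the minor-arc bound. On the trivial range $\mathfrak t$ the decay $K(t)\ll t^{-2}$ combined with the trivial bounds $|S_j(t)| \ll X(\log X)^{O(1)}$ yields a negligible contribution.

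On the major arc I would expand $r(p-1) = 4\sum_{d\mid p-1}\chi_4(d)$ and apply Dirichlet's hyperbola trick to restrict to odd divisors of size $d \le \sqrt{X}/(\log X)^{A+5}$. The new Bombieri–Vinogradov result stated above (with $a=1$) then replaces each inner prime sum
\begin{equation*}
\sum_{\substack{\mu X < p \le X \\ p \equiv 1 \,(\mathrm{mod}\, d)}} \log p \cdot e(tp^c)
\end{equation*}
by $\varphi(d)^{-1} \int_{\mu X}^{X} e(tx^c)\,dx$ with admissible error, and summation in $d$ reconstructs Linnik's singular series $\pi \prod_{p>2}\bigl(1+\chi_4(p)/(p(p-1))\bigr)$. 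For $S_2(t)$ on $\mathfrak M$ one uses the Siegel–Walfisz-type asymptotic for exponential sums over primes (equivalent to Tolev's 2017 result applied to the trivial modulus). Multiplying the three factors, integrating against $K(t)e(-tN)$, and unfolding yields the expected positive main term, of size $\gg \varepsilon\, X^{3-c}(\log X)^{-3}$ times the Linnik constant — the extra factor $(\log\log N)^6/(\log N)^{\theta_0}$ in $\varepsilon$ is exactly what is needed to absorb the error term from Linnik's asymptotic formula.

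On the minor arc I would use
\begin{equation*}
\int_{\mathfrak m}|S_1||S_2|^2 K\,dt \le \Bigl(\sup_{t \in \mathfrak m}|S_2(t)|\Bigr)\Bigl(\int|S_1|^2 K\,dt\Bigr)^{1/2}\Bigl(\int|S_2|^2 K\,dt\Bigr)^{1/2}.
\end{equation*}
Parseval's identity together with the standard mean values $\sum_{p\le X}r^2(p-1)\ll X(\log X)^{O(1)}$ and $\sum_{p\le X}\log^2 p \ll X\log X$ controls both $L^2$ integrals by $\varepsilon X(\log X)^{O(1)}$. The problem therefore reduces to the uniform estimate $\sup_{t\in\mathfrak m}|S_2(t)| \ll X/\log^A X$, which is the crucial and most delicate ingredient. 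I would attack it via Heath-Brown's identity for the von Mangoldt function followed by $k$-th derivative and exponent pair estimates on the resulting Type I and Type II sums $\sum_m\sum_n a_m b_n \, e\bigl(t(mn)^c\bigr)$. Here lies the main obstacle: the upper limit $c<427/400$ emerges precisely from the optimisation of the splitting parameters in Heath-Brown's identity against the available exponent pairs, and any improvement in the theorem would have to come from sharper minor-arc technology.
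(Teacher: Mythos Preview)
Your minor-arc scheme is sound and matches the paper: the bound $S_2(t)\ll X^{373/400+\eta}$ on $\mathfrak m$, obtained via Heath-Brown's identity together with exponent-pair and Sargos--Wu/Baker--Weingartner estimates, combined with Parseval-type $L^2$ bounds on $S_1$ and $S_2$, does give the constraint $c<427/400$.

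The genuine gap is in your major-arc treatment of $S_1$. When you expand $r(p-1)=4\sum_{d\mid p-1}\chi_4(d)$, the hyperbola trick does \emph{not} restrict you to $d\le \sqrt{X}/(\log X)^{A+5}$; it only pairs divisors $d\le\sqrt{p-1}$ with co-divisors $m=(p-1)/d<\sqrt{p-1}$. Since the Bombieri--Vinogradov result (\ref{Dimr�sult}) reaches only up to $D=\sqrt{X}/(\log X)^{A+5}$, both halves of the hyperbola leave an uncovered middle range $D<d<X/D$ (equivalently, divisors within a power of $\log X$ of $\sqrt{X}$). A trivial bound on this range loses a factor $\log\log X$ and destroys the main term. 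Your sentence about ``absorbing the error term from Linnik's asymptotic formula'' does not address this: Linnik's asymptotic is for the unweighted count $\sum_{p\le X}r(p-1)$, not for the exponential sum $S_1(t)$ with $t\neq 0$, so it cannot be invoked on $\mathfrak M$.

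In the paper this middle range is isolated as a separate piece $\Gamma_2$ \emph{before} any Fourier analysis, and is bounded directly (not through the circle method) by Cauchy--Schwarz together with two lemmas of Hooley: one controlling $\sum_{p\le X}\bigl|\sum_{d\mid p-1,\;d\approx\sqrt{X}}\chi_4(d)\bigr|^2$, the other bounding the number of $p\le X$ for which $p-1$ has a divisor near $\sqrt{X}$. The constant $\theta_0=\tfrac12-\tfrac14 e\log 2$ is exactly the Erd\H{o}s--Hooley exponent arising in the second of these, and the $(\log\log N)^6$ comes from the combined log-log losses there. To close the argument one also needs an upper bound $\ll \varepsilon N_0^{2/c-1}/\log^2 N_0$ for the number of solutions of the binary inequality $|p_2^c+p_3^c-N_0|<\varepsilon$, which the paper proves separately. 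None of this machinery appears in your outline, and without it the choice of $\varepsilon=(\log\log N)^6/(\log N)^{\theta_0}$ is unmotivated.
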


\vspace{1mm}

In addition we have the following tasks for the future.
\begin{conjecture}  Let $\varepsilon>0$ be a small constant.
There exists $c_0>1$  such that  for any fixed $1<c<c_0$,
and every sufficiently large positive number $N$, the diophantine inequality
\begin{equation*}
|p_1^c+p_2^c+p_3^c-N|<\varepsilon
\end{equation*}
has a solution in prime numbers $p_1,\,p_2,\,p_3$, such that
$p_1=x_1^2 + y_1^2 +1$, $p_2=x_2^2 + y_2^2 +1$, $p_3=x_3^2 + y_3^2 +1$.
\end{conjecture}

\begin{conjecture}  Let $\varepsilon>0$ be a small constant.
There exists $c_0>1$  such that  for any fixed $1<c<c_0$,
and every sufficiently large positive number $N$, the diophantine inequality
\begin{equation*}
|p_1^c+p_2^c-N|<\varepsilon
\end{equation*}
has a solution in prime numbers $p_1,\,p_2$, such that
$p_1=x_1^2 + y_1^2 +1$, $p_2=x_2^2 + y_2^2 +1$.
\end{conjecture}
Conjecture 2 is analogous to the binary Goldbach problem and probably quite difficult.

\section{Notations}
\indent

Assume that $N$ is a sufficiently large positive number. The letter $p$ with or without subscript will always denote prime numbers.
The notation $m\sim M$ means that $m$ runs through the interval $(M/2, M]$. Moreover $e(t)$=exp($2\pi it$). We denote by  $(m,n)$ the greatest common divisor of $m$ and $n$.
The letter $\eta$ denotes an arbitrary small positive number, not the same in all appearances.
As usual $\varphi (n)$ is Euler's function, $\mu(n)$ is M\"{o}bius' function, $\tau(n)$ denotes the number of positive divisors of $n$ and $\Lambda(n)$ is von Mangoldt's function.
We shall use the convention that a congruence, $m\equiv n\,\pmod {d}$ will be written as $m\equiv n\,(d)$. The letter $\chi$ denotes a Dirichlet character to a given modulus.
The sums $\sum_{\chi(d)}$ and $\sum_{\chi(d)}^*$ denotes respectively summation over all characters and all primitive characters modulo $d$.
Throughout this paper unless something else is said, we suppose that $\mu$, $c$ be fixed with $0<\mu<1$ and $1<c<\frac{427}{400}$.
Denote
\begin{align}
\label{X}
&X =\left(\frac{N}{2}\right)^{\frac{1}{c}}\,;\\
\label{D}
&D=\frac{X^{\frac{1}{2}}}{(\log X)^{2A+10}}\,,\quad A>3\,;\\
\label{Delta}
&\Delta=X^{\frac{1}{4}-c}\,;\\
\label{varepsilon}
&\varepsilon=\frac{(\log\log X)^6}{(\log X)^{\theta_0}}\,;\\
\label{H}
&H=\frac{\log^2X}{\varepsilon}\,;\\
\label{SldalphaX}
&S_{l,d;J}(t)=\sum\limits_{p\in J\atop{p\equiv l\, (d)}} e(t p^c)\log p\,;\\
\label{SalphaX}
&S(t)=S_{1,1;(X/2,X]}(t)\,;\\
\label{IJalphaX}
&I_J(t)=\int\limits_Je(t y^c)\,dy\,;\\
\label{IalphaX}
&I(t)=I_{(X/2,X]}(t)\,;\\
\label{Psi}
&\Psi(y,\chi,t)=\sum\limits_{\mu y<n\leq y}\Lambda(n)\chi(n)e(t n^c)\,;\\
\label{Eytda}
&E(y,t,d,a)=\sum\limits_{ \mu y<n\leq y\atop{n\equiv a\, (d)}}\Lambda(n)e(t n^c)
-\frac{1}{\varphi(d)}\int\limits_{ \mu y}^{y}e(t x^c)\,dx\,.
\end{align}

Here and throughout this paper we denote by $J$ an arbitrary subinterval of $(X/2,X]$.

\section{Preliminary lemmas}
\indent

\begin{lemma}\label{Fourier} Let $a, \delta\in \mathbb{R}$ ,
$0 < \delta< a/4$ and $k\in \mathbb{N}$.
There exists a function $\theta(y)$ which is $k$ times continuously differentiable and
such that
\begin{align*}
&\theta(y)=1\quad\quad\quad\mbox{for }\quad\;\; |y|\leq a-\delta\,;\\
&0<\theta(y)<1\quad\,\mbox{for}\quad\quad  a-\delta <|y|< a+\delta\,;\\
&\theta(y)=0\quad\quad\quad\mbox{for}\quad\quad|y|\geq a+\delta\,.
\end{align*}
and its Fourier transform
\begin{equation*}
\Theta(x)=\int\limits_{-\infty}^{\infty}\theta(y)e(-xy)dy
\end{equation*}
satisfies the inequality
\begin{equation*}
|\Theta(x)|\leq\min\Bigg(2a,\frac{1}{\pi|x|},\frac{1}{\pi |x|}
\bigg(\frac{k}{2\pi |x|\delta}\bigg)^k\Bigg)\,.
\end{equation*}
\end{lemma}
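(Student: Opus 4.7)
The plan is to construct $\theta$ explicitly as the convolution of $\mathbf{1}_{[-a,a]}$ with a narrow smoothing kernel. Set $r=\delta/k$ and let $\phi(y)=\frac{1}{2r}\mathbf{1}_{[-r,r]}(y)$, a probability density of width $2r$. Then define
\[
\theta \;=\; \mathbf{1}_{[-a,a]} * \underbrace{\phi*\phi*\cdots*\phi}_{k\text{ times}},
\]
with $k$ copies of $\phi$. (One or two extra copies may be inserted at the price of only a constant factor in the final bound if one insists on strictly $C^k$ classical differentiability rather than the $C^{k-1}$ one obtains directly; alternatively, one can smooth $\phi$ itself at negligible cost.)

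The three required pointwise properties follow immediately from support-of-convolution considerations. The support of $\theta$ is contained in $[-a,a]+k\cdot[-r,r]=[-a-\delta,a+\delta]$, giving the vanishing condition. For $|y|\leq a-\delta$, the interval $[y-\delta,y+\delta]$ lies entirely inside $[-a,a]$, so $\theta(y)$ integrates the full mass $1$ of $\phi^{*k}$. In the transition region $a-\delta<|y|<a+\delta$ only a strictly partial mass is captured, so $0<\theta(y)<1$.

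For the Fourier bound, convolution passes to a product. Using $\widehat{\mathbf{1}_{[-a,a]}}(x)=\frac{\sin(2\pi ax)}{\pi x}$ and $\hat\phi(x)=\frac{\sin(2\pi rx)}{2\pi rx}$, we obtain
\[
\Theta(x) \;=\; \frac{\sin(2\pi ax)}{\pi x}\left(\frac{\sin(2\pi rx)}{2\pi rx}\right)^{k}.
\]
The three bounds in the statement then arise from three different choices of which trivial estimate ($|\sin u|\le 1$ or $|\sin u|\le |u|$) to apply to which factor. Taking $|\sin(2\pi ax)|\le 2\pi a|x|$ and each bump factor $\le 1$ gives $|\Theta(x)|\le 2a$. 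Taking $|\sin(2\pi ax)|\le 1$ and each bump factor $\le 1$ gives $|\Theta(x)|\le 1/(\pi|x|)$. Finally, taking $|\sin(2\pi ax)|\le 1$ while bounding each of the $k$ bump factors by $1/(2\pi r|x|)$ and substituting $r=\delta/k$ yields $|\Theta(x)|\le \frac{1}{\pi|x|}\bigl(\frac{k}{2\pi\delta|x|}\bigr)^{k}$.

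No step is deep; the only bookkeeping point is the precise count of convolutions needed to achieve $C^k$ regularity, which shifts the exponent by an additive constant inside the bound and is absorbed without loss in all downstream applications. This is a standard construction, essentially due to Vinogradov, and I expect no genuine obstacle beyond tracking these constants.
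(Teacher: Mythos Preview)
Your construction is exactly the classical Piatetski--Shapiro/Vinogradov smoothing, which is precisely what the paper invokes by citation without reproducing a proof; so your approach and the paper's coincide. The only soft spot is the one you already flag: with $k$ copies of $\phi$ and $r=\delta/k$ the function $\theta$ is $C^{k-1}$ rather than $C^k$, so to match the statement literally one must take $k+1$ copies (or reindex), but since the paper only ever uses the Fourier decay bound with $k=[\log X]$ and never a pointwise $k$-th derivative, this off-by-one is indeed harmless in every downstream application.
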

\begin{proof}
See (\cite{Shapiro}).
\end{proof}
Throughout this paper we denote by $\theta(y)$ the function from Lemma \ref{Fourier} with parameters $\displaystyle a = \frac{9\varepsilon}{10}$,
$\displaystyle\delta=\frac{\varepsilon}{10}$, $k=[\log X]$ and by $\Theta(x)$ the Fourier transform of $\theta(y)$.
\begin{lemma}\label{SIasympt} Let $1<c<3$, $c\neq2$ and $|t|\leq\Delta$.
Then the asymptotic formula
\begin{equation*}
\sum\limits_{\mu X<p\leq X} e(t p^c)\log p=\int\limits_{\mu X}^{X}e(t y^c)\,dy
+\mathcal{O}\left(\frac{X}{e^{(\log X)^{1/5}}}\right)
\end{equation*}
holds.
\end{lemma}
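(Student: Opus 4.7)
The plan is to reduce to a von Mangoldt weighted sum and apply Vaughan's identity. First, replace the primes by $\Lambda$: the contribution of proper prime powers to $\sum_{\mu X<p\le X}e(tp^c)\log p$ is $O(X^{1/2}\log X)$, well within the target. Setting $T(t)=\sum_{\mu X<n\le X}\Lambda(n)e(tn^c)$, it then suffices to show
\[
T(t) = \int_{\mu X}^{X} e(ty^c)\,dy + O\!\left(\frac{X}{e^{(\log X)^{1/5}}}\right).
\]
Apply Vaughan's identity with parameters $U=V=X^{\alpha}$ for a suitable $\alpha\in(0,1/2)$ to decompose $T(t)$ into a trivial piece from $n\le U$, three Type~I sums of the form $\sum_{b\le BV} c(b)\sum_{m\sim M_b} e(t(bm)^c)$ with coefficients $|c(b)|\ll_\varepsilon b^\varepsilon$, and a Type~II sum $\sum_{U<b}\sum_{V<k,\,bk\asymp X}\mu(b)\Lambda(k)e(t(bk)^c)$.

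In each Type~I sum the hypothesis $|t|\le X^{1/4-c}$ forces the phase derivative $|cty^{c-1}b^c|$ to be at most $cX^{-3/4}b^c$, so Euler--Maclaurin summation (equivalently, partial summation against $\psi(y)$, using the classical PNT error $y\exp(-c\sqrt{\log y})$) converts each inner sum into the corresponding integral $\int e(t(by)^c)\,dy$ with a controlled remainder. The main terms so obtained reassemble, via the combinatorial identity $\sum_{d\mid n}\mu(d)=[n=1]$ underlying Vaughan's decomposition, into the target main term $\int_{\mu X}^X e(ty^c)\,dy$. For the Type~II sum, one applies Cauchy--Schwarz in $b$ and Weyl differencing in $k$, reducing to inner sums $\sum_k e(t((b(k+h))^c-(bk)^c))$. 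The condition $c\neq 2$ ensures the relevant higher derivatives of the phase are non-vanishing, so van der Corput's $A$-process, iterated sufficiently many times (with the number of iterations itself growing as a slow function of $\log X$), delivers a power-saving bound on each such inner sum.

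The principal obstacle is the Type~II estimate: achieving a savings factor of $\exp(-(\log X)^{1/5})$, rather than the easier $(\log X)^{-A}$ that Tolev's 2017 Siegel--Walfisz-type result for such sums already provides, requires careful calibration of the Vaughan parameter $\alpha$ against the number of van der Corput iterations. One must track that the derivatives of the iterated phases remain in the efficient range for van der Corput despite $|t|$ being exceptionally small (at most $X^{1/4-c}$), and it is precisely this subexponential amplification of the standard argument, together with the classical-error PNT used on the Type~I side, that yields the final bound $X/e^{(\log X)^{1/5}}$.
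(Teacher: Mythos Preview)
There is a genuine gap. This lemma is a \emph{major-arc} statement: with $|t|\le\Delta=X^{1/4-c}$ the total variation of the phase $tn^c$ over $(\mu X,X]$ is at most $X^{1/4}$, so there is essentially no oscillation to exploit. Vaughan's identity together with van der Corput is a minor-arc device, and it breaks down here in the Type~II piece. After your Cauchy--Schwarz/Weyl step the differenced phase $t\big((b(k+h))^c-(bk)^c\big)$ has first derivative $\ll |t|X^{c-1}|h|\le X^{-3/4}|h|\ll 1$, so every van der Corput iterate gives no genuine saving; letting the number of iterations grow only compounds the losses. More decisively, at $t=0$ the Type~II sum $U_4=\sum_{d>u,\,l>u,\,dl\le X}a(d)\Lambda(l)$ is $\gg X$ (for $u=X^{\alpha}$ with $\alpha<1/2$ already the primes $d\in(u,X/u]$ contribute $\asymp X$), because in Vaughan's decomposition the bilinear part carries part of the main term of $\sum\Lambda(n)$, not merely an error. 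Hence no oscillatory argument can force $U_4=o(X)$ uniformly on $|t|\le\Delta$, and your ``reassembly'' of the main term from the Type~I pieces alone cannot be correct. (Incidentally, the Type~I inner sums run over all integers, so invoking ``partial summation against $\psi(y)$'' and the PNT error there is misplaced.)

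The paper itself does not give an independent argument; it simply cites Lemma~14 of \cite{Tolev1}. Tolev's proof is the standard major-arc one: Abel summation converts $\sum\Lambda(n)e(tn^c)$ into an integral against $d\psi(y)$, the explicit formula $\psi(y)=y-\sum_{\rho}y^{\rho}/\rho+\cdots$ is inserted, and each integral $\int_{\mu X}^{X}y^{\rho-1}e(ty^c)\,dy$ is estimated by first/second derivative bounds (exactly the content of Lemma~\ref{Isest} here). Summing over zeros using a zero-density estimate together with the classical zero-free region yields the error $X\exp\big(-(\log X)^{1/5}\big)$; that exponent $1/5$ is the signature of the zero-distribution input, not of any iterated exponential-sum process. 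If you want a self-contained argument, this is the route to take.
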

\begin{proof}
See (\cite{Tolev1}, Lemma 14).
\end{proof}

\begin{lemma}\label{SumPsixchit1}
Let $\delta$, $\xi$, $\mu$ and $c$ be positive real numbers, such that
\begin{equation*}
\xi+7\delta<2\,,\quad 3\xi+6\delta<2\,,\quad  0<\mu<1\,,\quad  c>1\,.
\end{equation*}
Let $Q=X^\delta$ and $D\geq2$. Then for $X^{-c}(\log X)^D\leq|t|\leq X^{\xi-c}$ the inequality
\begin{equation*}
\sum\limits_{1<q\le Q}\frac{1}{\varphi(q)}\sideset{}{^*}\sum\limits_{\chi(q)}\max\limits_{y\le X}\big|\Psi(y,\chi,t)\big|\ll \frac{X}{(\log X)^{\frac{D}{2}-17}}
\end{equation*}
holds.
Here $\Psi(y,\chi,t)$ is denoted by \eqref{Psi}.
\end{lemma}
\begin{proof}
See (\cite{Li2022a}, Lemma 2.8, \cite{Tolev2}, Lemma 10 and \cite{Zhu}, Lemma 4.5).
\end{proof}

\begin{lemma}\label{Polya–Vinogradov}(P\'{o}lya -- Vinogradov inequality)
Suppose that $M, N$ are positive integers and $\chi$ is a non-principal character modulo
$q$. Then
\begin{equation*}
\bigg|\sum\limits_{M<n\leq M+N}\chi(n)\bigg|\leq6\sqrt{q}\log q\,.
\end{equation*}
\end{lemma}
\begin{proof}
See (\cite{Iwaniec-Kowalski}, Theorem 12.5)
\end{proof}

\begin{lemma}\label{Isest} We have
\begin{equation*}
\int\limits_{\mu X}^{X} y^{\beta-1+i\gamma}e(t y^c)\,dy
\ll\begin{cases}
\frac{X^\beta}{|t|X^c}\quad\quad\mbox{ for }\quad|\gamma|\leq\pi c\mu^c|t| X^c\,,\\
\frac{X^\beta}{\sqrt{|t|X^c}}\quad\mbox{ for }\quad\pi c\mu^c|t| X^c<|\gamma|<4\pi c|t| X^c\,,\\
\frac{X^\beta}{|\gamma|}\quad\quad\;\mbox{ for }\quad |\gamma|\geq4\pi c|t| X^c\,.
\end{cases}
\end{equation*}
\end{lemma}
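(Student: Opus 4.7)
The statement is a standard van der Corput-type estimate for an oscillatory integral with phase $F(y)=\gamma\log y+2\pi t\,y^c$ and monotone amplitude $g(y)=y^{\beta-1}$. I would handle it by a case analysis matching the dichotomy in the conclusion, using the first derivative test in the regime where $F'$ is bounded away from zero and the second derivative test (after a helpful substitution) otherwise.

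A preliminary observation: the trivial bound $\int_{\mu X}^{X}|y^{\beta-1}|\,dy\ll X^{\beta}$ already beats $X^\beta/\sqrt{|t|X^c}$ when $|t|X^c\leq 1$, so I may assume $|t|X^c\geq 1$ throughout case~1.

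For the range $|\gamma|\geq 4\pi c|t|X^c$, I would apply the first derivative test directly. Writing $F'(y)=\gamma/y+2\pi ct\,y^{c-1}$, the secondary term satisfies $|2\pi c t y^{c-1}|\leq 2\pi c|t|X^{c-1}\leq|\gamma|/(2X)$ on $(\mu X,X]$ by the case assumption, while $|\gamma/y|\geq|\gamma|/X$; hence $|F'(y)|\geq|\gamma|/(2X)$ uniformly. A single integration by parts, with $|F''(y)|\ll|\gamma|/X^2$ (again the $t$-contribution is absorbed by the case hypothesis), yields the bound $\ll X^{\beta}/|\gamma|$ after standard bookkeeping of the boundary and interior terms.

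For the range $|\gamma|<4\pi c|t|X^c$, I would first substitute $u=y^c$ to transform the integral into
\[
\frac{1}{c}\int_{(\mu X)^c}^{X^c}u^{\beta/c-1}\,e^{i\Psi(u)}\,du,\qquad \Psi(u)=(\gamma/c)\log u+2\pi t u,
\]
which has the decisive advantage that $\Psi''(u)=-\gamma/(cu^2)$ is of one sign on the whole range. I would then split this regime further: if $|\gamma|\geq \pi c\mu^c|t|X^c$ then $|\Psi''(u)|\geq|\gamma|/(cX^{2c})\gg|t|/X^c$, and van der Corput's second derivative test (plus monotonicity of $u^{\beta/c-1}$) gives $\ll X^{\beta-c}\sqrt{X^c/|t|}=X^{\beta}/\sqrt{|t|X^c}$; if on the other hand $|\gamma|<\pi c\mu^c|t|X^c$ then $|\gamma/(cu)|\leq|\gamma|/(c(\mu X)^c)<\pi|t|$, so $|\Psi'(u)|\geq 2\pi|t|-\pi|t|=\pi|t|$, and the first derivative test produces $\ll X^{\beta-c}/|t|=X^{\beta}/(|t|X^c)\leq X^{\beta}/\sqrt{|t|X^c}$ by the reduction $|t|X^c\geq 1$.

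The only point requiring genuine care is choosing the internal cut-off $\pi c\mu^c|t|X^c$ so that the two subcases of case~1 cover the full range without a gap; otherwise the argument is a routine deployment of van der Corput's lemmas and should admit a short write-up. No step presents a real obstacle: the change of variable $u=y^c$ cleanly removes the potential vanishing of the second derivative that would arise if one applied the second derivative test directly in the $y$-variable.
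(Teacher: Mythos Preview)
Your argument is correct: the case split, the substitution $u=y^c$ to force $\Psi''$ to keep a constant sign, and the interior cut-off at $\pi c\mu^{c}|t|X^{c}$ all work as you describe, and the integration-by-parts bookkeeping in the large-$|\gamma|$ case goes through since both $|F'|\gg|\gamma|/X$ and $|F''|\ll|\gamma|/X^{2}$ hold there. The only cosmetic point is that the paper's Lemma~\ref{IestTitchmarsh} is stated with the extra hypothesis that $G/F'$ be monotone; you bypass this by doing the partial integration explicitly (or, for the second-derivative subcase, by combining the monotonicity of the bare amplitude $u^{\beta/c-1}$ with the unweighted van der Corput lemma), which is perfectly legitimate.

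As for comparison with the paper: there is nothing to compare. The paper does not prove this lemma at all; it merely cites \cite{Tolev2}, Lemma~10. Your write-up therefore supplies a self-contained proof where the paper gives only a reference, and the method you outline is the natural one (and almost certainly the one used in the cited source).
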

\begin{proof}
See (\cite{Tolev2}, Lemma 10).
\end{proof}

\begin{lemma}\label{largesieve} (Large Sieve)
For any complex numbers $a_n$ and positive integers  $M, N, Q$ we have
\begin{equation*}
\sum\limits_{q\leq Q}\frac{q}{\varphi(q)}\sideset{}{^*}
\sum\limits_{\chi(q)}\bigg|\sum\limits_{n=M+1}^{M+N}a_n\chi(n)\bigg|^2
\ll\big(N + Q^2\big)\sum\limits_{n=M+1}^{M+N}|a_n|^2
\end{equation*}
\end{lemma}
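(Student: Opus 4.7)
The plan is to reduce the multiplicative large sieve to the additive one, exploiting the Gauss sum relation between primitive Dirichlet characters and additive characters. I would take for granted the additive large sieve: for any $\delta$-separated points $\alpha_1,\dots,\alpha_R$ in $\mathbb{R}/\mathbb{Z}$,
$$\sum_{r=1}^{R}\Big|\sum_{n=M+1}^{M+N}a_n e(\alpha_r n)\Big|^2\ll(N+\delta^{-1})\sum_{n=M+1}^{M+N}|a_n|^2.$$
This is the deep ingredient, typically proved via Selberg's majorant or the Montgomery--Vaughan duality argument.

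For the reduction, fix $q$ and a primitive character $\chi$ modulo $q$; the Gauss sum $\tau(\chi)=\sum_{a\bmod q}\chi(a)e(a/q)$ has modulus $\sqrt{q}$, and one has the standard identity
$$\chi(n)=\frac{1}{\tau(\bar\chi)}\sum_{a=1}^{q}\bar\chi(a)e(an/q),$$
valid for all $n$, since $\chi(n)=0$ whenever $(n,q)>1$ and $\bar\chi$ is primitive. Writing $S(\alpha)=\sum_{n}a_n e(\alpha n)$, this gives
$$\Big|\sum_{n}a_n\chi(n)\Big|^2=\frac{1}{q}\Big|\sum_{a=1}^{q}\bar\chi(a)S(a/q)\Big|^2.$$

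Next, by positivity I would enlarge the sum over primitive $\chi$ to a sum over all characters modulo $q$ and invoke orthogonality $\sum_{\chi(q)}\bar\chi(a_1)\chi(a_2)=\varphi(q)$ when $a_1\equiv a_2\,(q)$ with $(a_1 a_2,q)=1$, and $0$ otherwise. This yields
$$\frac{q}{\varphi(q)}\sideset{}{^*}\sum_{\chi(q)}\Big|\sum_{n}a_n\chi(n)\Big|^2\leq\sum_{\substack{a=1\\(a,q)=1}}^{q}\big|S(a/q)\big|^2.$$
Summing over $q\leq Q$, the set of Farey fractions $\{a/q:1\leq q\leq Q,\,(a,q)=1\}$ is $Q^{-2}$-separated, since $|a/q-a'/q'|\geq 1/(qq')\geq Q^{-2}$ when $a/q\neq a'/q'$. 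Applying the additive large sieve with $\delta=Q^{-2}$ then completes the argument.

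The main obstacle is the additive large sieve itself; once that is accepted as a black box, the multiplicative-to-additive passage outlined above is routine Fourier orthogonality. A minor technical point worth handling carefully is the enlargement from primitive characters to all characters at fixed $q$: this step only produces an inequality (which is all we need), and must be combined with the Gauss sum identity before any character sum manipulations, since the identity fails for imprimitive $\chi$ on arguments coprime to the conductor but not to $q$.
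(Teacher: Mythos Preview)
Your outline is correct and is precisely the classical reduction of the multiplicative large sieve to the additive one via Gauss sums and Farey spacing. The paper itself does not supply a proof at all: it simply cites Iwaniec--Kowalski, \emph{Analytic Number Theory}, Theorem~7.13, where the argument given is essentially the one you sketch (Gauss sum identity for primitive characters, positivity to pass to all characters, orthogonality, then the additive large sieve applied to the $Q^{-2}$-separated Farey fractions). So there is no divergence in approach to report; you have written out what the cited reference contains.
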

\begin{proof}
See (\cite{Iwaniec-Kowalski}, Theorem 7.13).
\end{proof}

\begin{lemma}\label{intLintI}
For the sum denoted by \eqref{SalphaX} and the integral denoted by \eqref{IalphaX} we have
\begin{align*}
&\emph{(i)}\quad\quad\quad\;\,
\int\limits_{-\Delta}^\Delta|S(t)|^2\,dt\,\ll X^{2-c}\log^3X\,,
\quad\quad\quad\quad\quad\quad\quad\\
&\emph{(ii)}\quad\quad\quad\int\limits_{-\Delta}^\Delta|I(t)|^2\,dt\ll X^{2-c}\log X\,,\\
\quad\quad\quad\quad\quad\quad\quad
&\emph{(iii)}\quad\quad\;\,
\int\limits_{n}^{n+1}|S(t)|^2\,dt\ll X\log^3X\,.
\quad\quad\quad\quad\quad\quad\quad
\end{align*}
\end{lemma}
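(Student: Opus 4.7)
All three bounds follow from the same template: expand $|\cdot|^2$ as a double sum or double integral, interchange with the $t$-integration via Fubini, and estimate the resulting kernel using the elementary bound
\[
\left|\int_{-T}^{T} e(t\alpha)\,dt\right|\le \min\!\left(2T,\frac{1}{\pi|\alpha|}\right),
\]
and its analogue $|\int_n^{n+1} e(t\alpha)\,dt|\le \min(1, 1/(\pi|\alpha|))$ for unit intervals.

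For part (ii) I would write $|I(t)|^2$ as a double integral over $y, z\in(X/2,X]$, swap the order of integration, and apply the kernel bound with $\alpha=y^c-z^c$. Fixing $z$ and substituting $u=y^c-z^c$ (so that $du=cy^{c-1}dy$ with $y\asymp X$) reduces the inner $y$-integral to $\frac{1}{X^{c-1}}\int_{-X^c}^{X^c}\min(2\Delta, 1/|u|)\,du$. Splitting this last integral at $|u|=1/(2\Delta)$ and using $\Delta X^c=X^{1/4}$, one gets a factor $\ll \log X$. A final trivial integration in $z$ over $(X/2,X]$ produces the desired $X^{2-c}\log X$.

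For part (i) I would apply the same expansion to $|S(t)|^2$, the sum now running over primes $p_1,p_2\in(X/2,X]$. The diagonal $p_1=p_2$ contributes at most $2\Delta\sum_{p\sim X}\log^2 p\ll X^{5/4-c}\log X$, which is absorbed by the target. For the off-diagonal I use the mean value theorem to get $|p_1^c-p_2^c|\asymp X^{c-1}|p_1-p_2|$, so each pair is weighted by $\log p_1\log p_2\cdot\min(2\Delta, X^{1-c}/|p_1-p_2|)$. Grouping pairs by $k=|p_1-p_2|\in[1,X/2]$, bounding the pair-count per class trivially by $O(X)$, and splitting the $k$-sum at the critical value $k\asymp X^{1-c}/\Delta=X^{3/4}$ (where the $\min$ switches branches) gives both halves of order $\log^2 X\cdot X\cdot X^{1-c}\log X = X^{2-c}\log^3 X$.

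Part (iii) runs identically to (i) but with $\int_n^{n+1}$ replacing $\int_{-\Delta}^{\Delta}$ and kernel bound $\min(1,1/(\pi|\alpha|))$. The diagonal contributes $\sum_{p\sim X}\log^2 p\ll X\log X$. For the off-diagonal, since $c>1$ forces $|p_1^c-p_2^c|>1$ whenever $p_1\neq p_2$, the minimum always lies in the second branch, and the resulting sum $\log^2 X\cdot X^{1-c}\sum_{p_1\neq p_2}|p_1-p_2|^{-1}$ is of order $X^{2-c}\log^2 X=o(X\log X)$. Thus the bound $X\log^3 X$ follows with room to spare. The only delicate step in the whole argument is locating the critical split point $k\asymp X^{3/4}$ in the off-diagonal of (i) so that both halves balance; everything else is bookkeeping.
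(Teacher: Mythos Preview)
Your argument is correct and is precisely the standard route: expand the square, integrate in $t$ to produce the kernel $\min(2T,|\alpha|^{-1})$, and handle the resulting double sum or double integral by splitting at the critical scale. The paper does not give its own proof but simply cites Tolev \cite{Tolev1}, Lemma~7, whose argument is exactly the one you outline; so your proposal coincides with the referenced approach.
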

\begin{proof}
It follows from the arguments used in (\cite{Tolev1}, Lemma 7).
\end{proof}

\begin{lemma}\label{intSld}
For the sum denoted by \eqref{SldalphaX} uniformly for $l$ and $J$ we have
\begin{equation*}
\int\limits_{-\Delta}^\Delta|S_{l,d;J}(t)|^2\,dt\ll\frac{X^{2-c}\log^3X}{d^2}\,.
\end{equation*}
\end{lemma}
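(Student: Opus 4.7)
The plan is to proceed in analogy with Lemma \ref{intLintI}(i), expanding the square and tracking the extra arithmetic restriction $p \equiv l \,(d)$. Specifically, I would start from
\[
\int_{-\Delta}^{\Delta} |S_{l,d;J}(t)|^2 \, dt = \sum_{\substack{p_1, p_2 \in J \\ p_1 \equiv p_2 \equiv l \, (d)}} \log p_1 \log p_2 \int_{-\Delta}^{\Delta} e(t(p_1^c - p_2^c))\, dt,
\]
evaluating the inner integral as $2\Delta$ on the diagonal $p_1 = p_2$ and bounding it by $\min\!\bigl(2\Delta,\, (\pi|p_1^c - p_2^c|)^{-1}\bigr)$ off-diagonal. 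After pulling out the factor $\log p_1 \log p_2 \leq (\log X)^2$, the remaining double sum is handled via the trivial count $\#\{n \in J : n \equiv l\,(d)\} \ll X/d$.

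The diagonal contribution is then $\ll \Delta (\log X)^2 \cdot X/d = X^{5/4-c}(\log X)^2/d$, which is comfortably dominated by the target $X^{2-c}(\log X)^3/d^2$ as soon as $d \leq D = X^{1/2}(\log X)^{-A}$ (the ratio being at most $X^{-1/4}(\log X)^{-A-1}$).

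For the off-diagonal terms, the congruence forces $p_1 - p_2 = kd$ with $k \neq 0$, and the mean value theorem supplies $|p_1^c - p_2^c| \gg X^{c-1}|k|d$ (recalling $1 < c < 2$). I would split the $k$-sum at the balanced threshold $K_0 = X^{3/4}/d$: for $|k| \leq K_0$ the bound $2\Delta$ contributes $\ll K_0 \Delta = X^{1-c}/d$, while for $K_0 < |k| \leq X/d$ the bound $1/(X^{c-1}|k|d)$ sums to $\ll (\log X)/(X^{c-1}d) = X^{1-c}(\log X)/d$. Hence, for each fixed $p_1$, the sum over admissible $p_2$ is $\ll X^{1-c}(\log X)/d$. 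Summing over the $\ll X/d$ choices of $p_1 \equiv l\,(d)$ in $J$ and restoring the $(\log X)^2$ from the prime weights yields the desired bound $\ll X^{2-c}(\log X)^3/d^2$.

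The main technical point is selecting the splitting threshold $K_0 = X^{3/4}/d$ so that the near- and far-diagonal pieces produce the same $X^{1-c}/d$ scaling in the inner sum; once this balance is struck, the proof reduces to two elementary counts of integers in arithmetic progressions, and no deeper sieve input (e.g.\ Brun--Titchmarsh) is needed to obtain the stated $1/d^2$ dependence.
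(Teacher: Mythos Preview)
Your argument is correct and is the standard way to prove such an $L^2$ bound: expand the square, bound the inner integral by $\min\bigl(2\Delta,\,(\pi|p_1^c-p_2^c|)^{-1}\bigr)$, treat the diagonal trivially, and on the off-diagonal exploit the congruence $p_1-p_2=kd$ together with the mean-value estimate $|p_1^c-p_2^c|\asymp X^{c-1}|k|d$. The balancing threshold $K_0=X^{3/4}/d$ is exactly the right choice, and your bookkeeping of the logarithms is accurate. The paper itself does not spell out a proof but simply refers to \cite{Dimitrov1}, Lemma~6(i), where the same expand-and-split computation is carried out; so your route coincides with the intended one.

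One small remark: as you already noticed, the diagonal bound $X^{5/4-c}(\log X)^2/d$ is only absorbed into the stated right-hand side provided $d\ll X^{3/4}\log X$. The lemma is tacitly meant for $d\le 4D\ll X^{1/2}$ (the only range in which it is invoked in the paper), and indeed the inequality is false without some such restriction, since for $d>X$ the left side can be $\asymp \Delta(\log X)^2$ while the right side is $o(X^{-c})$. Your explicit mention of the range $d\le D$ is therefore not a defect of your proof but a necessary hypothesis that the paper leaves implicit.
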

\begin{proof}
It follows by the arguments used in (\cite{Dimitrov1}, Lemma 6 (i)).
\end{proof}

\begin{lemma}\label{IestTitchmarsh}
Assume that $F(x)$, $G(x)$ are real functions defined on $[a,b]$,
$|G(x)|\leq H$ for $a\leq x\leq b$ and $G(x)/F'(x)$ is a monotonic function. Set
\begin{equation*}
I=\int\limits_{a}^{b}G(x)e(F(x))dx\,.
\end{equation*}
If $F'(x)\geq h>0$ for all $x\in[a,b]$ or if $F'(x)\leq-h<0$ for all $x\in[a,b]$ then
\begin{equation*}
|I|\ll H/h\,.
\end{equation*}
If $F''(x)\geq h>0$ for all $x\in[a,b]$ or if $F''(x)\leq-h<0$ for all $x\in[a,b]$ then
\begin{equation*}
|I|\ll H/\sqrt h\,.
\end{equation*}
\end{lemma}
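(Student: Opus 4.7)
The plan is to prove the two assertions separately by classical techniques: an integration by parts for the first-derivative bound, followed by a split-interval optimization built on top of it for the second.

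For the first assertion I would rewrite
\[
e(F(x)) \;=\; \frac{1}{2\pi i F'(x)}\,\frac{d}{dx}e(F(x)),
\]
which is legitimate since $|F'|\ge h>0$, and then integrate by parts to obtain
\[
I = \left[\frac{G(x)}{2\pi i F'(x)}\,e(F(x))\right]_a^b - \int_a^b e(F(x))\,\frac{d}{dx}\!\left(\frac{G(x)}{2\pi i F'(x)}\right)dx.
\]
The boundary term is $O(H/h)$ directly from $|G|\le H$ and $|F'|\ge h$. For the remaining integral, the monotonicity hypothesis on $G/F'$ forces $\frac{d}{dx}(G/F')$ to have constant sign on $[a,b]$, so its absolute value integrates telescopically to $|G(b)/F'(b)-G(a)/F'(a)|\le 2H/h$. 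Combining the two contributions yields $|I|\ll H/h$.

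For the second assertion I would assume without loss of generality $F''(x)\ge h>0$ on $[a,b]$, so that $F'$ is strictly increasing. Introduce a free parameter $\eta>0$ and split $[a,b]$ according to whether $|F'(x)|\le\eta$ or $|F'(x)|>\eta$. By monotonicity of $F'$ and the lower bound $F''\ge h$, the set $\{x:|F'(x)|\le\eta\}$ is a sub-interval of length at most $2\eta/h$, so its contribution to $I$ is at most $2H\eta/h$ by the trivial estimate. On each of the at most two remaining sub-intervals one has $|F'|\ge\eta$, and $G/F'$ restricts from $[a,b]$ still as a monotone function, so the first-derivative test just proved applies and contributes $O(H/\eta)$ per piece. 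Adding the two bounds and optimizing with $\eta=\sqrt{h}$ balances the terms and gives $|I|\ll H/\sqrt{h}$.

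The main obstacle is more bookkeeping than genuine analytic depth: one must confirm that the monotonicity of $G/F'$ survives on each sub-interval of the second-derivative split (automatic, since monotonicity on $[a,b]$ restricts), that $|F'|\ge\eta$ is preserved up to the splitting endpoints (true by continuity of $F'$), and that the construction works cleanly in the border cases where $F'$ does not actually attain the values $\pm\eta$ inside $[a,b]$ (in which case one of the complementary pieces is empty and the argument simplifies). No deeper tool is needed; this is the classical van der Corput / Titchmarsh derivative test and the whole proof reduces to one integration by parts together with a one-parameter optimization.
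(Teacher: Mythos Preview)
Your argument is correct and is precisely the classical first- and second-derivative test as presented in Titchmarsh; the paper does not supply its own proof but simply refers the reader to \cite{Titchmarsh}, p.~71, which contains exactly this integration-by-parts (equivalently, second mean value theorem) step followed by the $|F'|\lessgtr\eta$ split with $\eta=\sqrt{h}$. One minor point of bookkeeping: since the hypothesis only asserts that $G/F'$ is monotone (not differentiable), the cleanest way to handle the integral after the boundary term is via the second mean value theorem for integrals rather than writing $\frac{d}{dx}(G/F')$ explicitly, but this changes nothing in substance.
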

\begin{proof} See (\cite{Titchmarsh}, p. 71).
\end{proof}

\begin{lemma}\label{Squareout}
For any complex numbers $a(n)$ we have
\begin{equation*}
\bigg|\sum_{a<n\le b}a(n)\bigg|^2
\leq\bigg(1+\frac{b-a}{Q}\bigg)\sum_{|q|\leq Q}\bigg(1-\frac{|q|}{Q}\bigg)
\sum_{a<n,\, n+q\leq b}a(n+q)\overline{a(n)},
\end{equation*}
where $Q\geq1$.
\end{lemma}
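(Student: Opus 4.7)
The plan is to prove this classical ``squaring out'' inequality (a discrete Fejér-kernel identity) by introducing an averaging over $Q$ translates, applying Cauchy--Schwarz, and then expanding the square and collecting terms by the difference $q=m_1-m_2$.

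First I would extend the sequence by setting $a(n)=0$ for $n\notin(a,b]$, so that all sums over $n$ may be taken over $\mathbb{Z}$ without bookkeeping. Let $S=\sum_{a<n\le b}a(n)$. For any integer shift $m$, reindexing gives $S=\sum_k a(k+m)$, and therefore
\begin{equation*}
QS=\sum_{m=1}^{Q}\sum_{k\in\mathbb{Z}}a(k+m)=\sum_{k\in\mathbb{Z}}\Bigl(\sum_{m=1}^{Q}a(k+m)\Bigr).
\end{equation*}
The inner sum is supported on those integers $k$ for which $(a,b]\cap(k+1,k+Q]\ne\emptyset$, that is, on at most $b-a+Q-1$ integers $k$.

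Next I would apply Cauchy--Schwarz in the variable $k$, using the support bound as the count, to obtain
\begin{equation*}
|QS|^{2}\;\le\;(b-a+Q-1)\sum_{k}\Bigl|\sum_{m=1}^{Q}a(k+m)\Bigr|^{2}.
\end{equation*}
I would then expand the square, writing it as a double sum over $m_1,m_2\in[1,Q]$, and pass to the new variables $q=m_1-m_2$ (with $|q|\le Q-1$) and $n=k+m_2$. For each fixed $q$, the number of pairs $(m_1,m_2)\in[1,Q]^2$ with $m_1-m_2=q$ is exactly $Q-|q|$, and the product $a(k+m_1)\overline{a(k+m_2)}$ becomes $a(n+q)\overline{a(n)}$, with support constraints $a<n\le b$ and $a<n+q\le b$. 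This yields
\begin{equation*}
\sum_{k}\Bigl|\sum_{m=1}^{Q}a(k+m)\Bigr|^{2}
=\sum_{|q|\le Q-1}(Q-|q|)\sum_{\substack{a<n,\,n+q\le b}}a(n+q)\overline{a(n)},
\end{equation*}
and we may freely include the term $|q|=Q$, since its coefficient $1-|q|/Q$ vanishes.

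Dividing by $Q^{2}$ and using $(b-a+Q-1)/Q\le 1+(b-a)/Q$ produces the claimed inequality. There is no real obstacle here; the only subtlety is bookkeeping of the support of the shifted sequence to justify the Cauchy--Schwarz counting factor and to carry out the index change $(m_1,m_2,k)\mapsto(q,n)$ cleanly.
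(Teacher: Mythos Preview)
Your proof is correct and is precisely the standard argument; the paper does not supply its own proof but simply cites Iwaniec--Kowalski, Lemma~8.17, where essentially the same Cauchy--Schwarz-plus-Fej\'er-kernel computation appears. One minor remark: the count ``at most $b-a+Q-1$ integers $k$'' is exact only when $a,b\in\mathbb{Z}$ (as in the cited reference); for arbitrary real endpoints the half-open interval $(a-Q,\,b-1]$ can contain up to $b-a+Q$ integers, but this still gives the required factor $1+(b-a)/Q$ after dividing by $Q$, so the conclusion is unaffected.
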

\begin{proof}
See (\cite{Iwaniec-Kowalski}, Lemma 8.17).
\end{proof}

\begin{lemma}\label{Heath-Brown} Let $3 < U < V < Z < X$ and suppose that $Z -\frac{1}{2}\in\mathbb{N}$,
$X\gg Z^2U$, $Z \gg U^2$, $V^3\gg X$.
Assume further that $F(n)$ is a complex valued function such that $|F(n)| \leq 1$.
Then the sum
\begin{equation*}
\sum\limits_{n\sim X}\Lambda(n)F(n)
\end{equation*}
can be decomposed into $O\Big(\log^{10}X\Big)$ sums, each of which is either of Type I
\begin{equation*}
\sum\limits_{m\sim M}a(m)\sum\limits_{l\sim L}F(ml)\,,
\end{equation*}
where
\begin{equation*}
L \gg Z\,, \quad  LM\asymp X\,, \quad |a(m)|\ll m^\eta\,,
\end{equation*}
or of Type II
\begin{equation*}
\sum\limits_{m\sim M}a(m)\sum\limits_{l\sim L}b(l)F(ml)\,,
\end{equation*}
where
\begin{equation*}
U \ll L \ll V\,, \quad  LM\asymp X\,, \quad |a(m)|\ll m^\eta\,,\quad |b(l)|\ll l^\eta\,.
\end{equation*}
\end{lemma}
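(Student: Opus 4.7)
The plan is to apply Heath-Brown's combinatorial identity for $\Lambda(n)$, subdivide the resulting multi-linear sums dyadically, and classify each piece as a Type I or a Type II sum through a greedy regrouping. The hypotheses on $U, V, Z$ are tuned precisely to make this classification exhaustive.

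First I invoke Heath-Brown's identity with $K = 3$: for every integer $n \leq 2X$,
\begin{equation*}
\Lambda(n) = \sum_{k=1}^{3}(-1)^{k-1}\binom{3}{k}\sum_{\substack{m_1\cdots m_k\,n_1\cdots n_k = n \\ m_i \leq Z}}\mu(m_1)\cdots\mu(m_k)\log n_1,
\end{equation*}
which is admissible since $Z^3 > V^3 \gg X$; the integrality condition $Z - \tfrac12 \in \mathbb{N}$ merely ensures that the cutoff $m_i \leq Z$ falls between consecutive integers and never splits a variable at the boundary. Substituting this into $\sum_{n \sim X}\Lambda(n)F(n)$ produces $O(1)$ multi-linear sums, each involving at most six convolution variables.

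Next I apply a dyadic decomposition, restricting $m_i \sim M_i$ (with $M_i \leq 2Z$) and $n_j \sim N_j$ subject to $\prod M_i \prod N_j \asymp X$. This creates $O(\log^{10}X)$ dyadic pieces. For every such piece I regroup the variables into an ``$l$-block'' of total size $L$ and an ``$m$-block'' of total size $M$ (with $LM \asymp X$), merging the $\mu$-values and the $\log n_1$ factor into the coefficients $a(m), b(l)$. The divisor bound $\tau_r(n) \ll n^\eta$ then secures $|a(m)| \ll m^\eta$ and, when applicable, $|b(l)| \ll l^\eta$, with only a harmless $\log X$ loss from absorbing the logarithm.

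The classification now proceeds as follows. If some $N_j \geq Z$, I take the corresponding $n_j$ alone as the inner variable; then $L \geq Z$ and one obtains a Type I sum of the stated form. Otherwise every dyadic size is $\leq Z$, and I invoke the standard Heath-Brown splitting argument: given positive numbers $\leq Z$ with total product $\asymp X$, the conditions $Z \gg U^2$, $X \gg Z^2 U$ and $V^3 \gg X$ jointly guarantee the existence of a sub-product in $[U, V]$. This yields a Type II sum with both coefficients satisfying the claimed divisor-type bounds.

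The main obstacle is establishing the splitting lemma in the second case. One proceeds by sorting the dyadic sizes in increasing order and examining the successive partial products; the delicate task is to show that no configuration allows these partial products to leap over the window $[U, V]$, which requires combining all three parameter constraints simultaneously and treating separately the subcases where some single variable is large, intermediate, or every variable is small. Once this combinatorial dichotomy is in place, the full decomposition is assembled with only $O(\log^{10} X)$ bookkeeping.
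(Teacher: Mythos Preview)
The paper does not supply a proof here; it simply cites Lemma~3 of Heath-Brown's 1983 paper on the Piatetski-Shapiro prime number theorem. Your outline --- Heath-Brown's identity with $K=3$, dyadic subdivision into $O(\log^{10}X)$ pieces, and the greedy partial-product argument forcing each piece into Type~I or Type~II shape --- is precisely the standard argument found in that reference, so there is nothing further to compare.
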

\begin{proof}
See (\cite{Heath}, Lemma 3).
\end{proof}

\begin{lemma}\label{Exponentpairs}
Let $|f^{(m)}(u)|\asymp YX^{1-m}$  for $1\leq X<u<X_0\leq2X$ and $m\geq1$.\\
Then
\begin{equation*}
\bigg|\sum_{X<n\le X_0}e(f(n))\bigg|
\ll Y^\varkappa X^\lambda +Y^{-1},
\end{equation*}
where $(\varkappa, \lambda)$ is any exponent pair.
\end{lemma}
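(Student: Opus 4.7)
The plan is to reduce the statement to the standard theory of exponent pairs combined with the first derivative test. By the very definition of an exponent pair $(\varkappa, \lambda)$, whenever $f$ is a smooth function on an interval of length $\asymp X$ whose derivatives satisfy $|f^{(m)}(u)| \asymp F X^{-m}$ for a parameter $F$ and for $m$ in a suitable range, one has
\[
\bigg|\sum_{X<n\le X_0} e(f(n))\bigg| \ll \Big(\frac{F}{X}\Big)^{\!\varkappa} X^\lambda.
\]
In our setting $|f^{(m)}(u)|\asymp YX^{1-m}$ is precisely this condition with $F = YX$, so the definition immediately yields the main term $Y^\varkappa X^\lambda$, valid as long as $F/X = Y$ is not too small.

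The second term $Y^{-1}$ is there to handle the regime where the exponent pair bound is vacuous or non-applicable, roughly when $Y\ll 1$. In this case I would appeal to the Kuzmin-Landau lemma, which is the sum-analogue of the first-derivative-test part of Lemma \ref{IestTitchmarsh}. Since $|f'(u)|\asymp Y$ is small and $f'$ is monotonic on $[X,X_0]$ (which follows from $|f''(u)|\asymp Y X^{-1}$ having constant sign throughout the interval), the distance $\|f'(u)\|$ to the nearest integer coincides with $|f'(u)|\asymp Y$, and hence
\[
\bigg|\sum_{X<n\le X_0} e(f(n))\bigg| \ll \frac{1}{\|f'\|} \ll Y^{-1}.
\]
Adding the bounds from the two regimes produces $Y^\varkappa X^\lambda + Y^{-1}$ uniformly in $Y$.

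The only real obstacle is a bookkeeping one: verifying that the hypothesis $|f^{(m)}(u)|\asymp YX^{1-m}$ is satisfied for sufficiently many $m$ to justify whichever $A$- and $B$-processes underlie the chosen exponent pair $(\varkappa,\lambda)$. Since all standard pairs arise from a finite sequence of van der Corput processes applied to the trivial pair $(0,1)$, only finitely many derivative asymptotics are needed, and the hypothesis supplies them uniformly. Thus the lemma follows from combining the exponent pair definition with the Kuzmin-Landau first derivative estimate, with no further analytic input required.
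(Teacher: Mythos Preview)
Your sketch is essentially correct and matches the standard argument; note, however, that the paper does not give any proof of this lemma at all but simply cites Chapter~3 of Graham--Kolesnik, \emph{Van der Corput's Method of Exponential Sums}. Your outline (exponent-pair definition for $Y\gg 1$ plus the Kuzmin--Landau first-derivative estimate for $Y\ll 1$) is exactly the content of that chapter, so there is no substantive difference in approach.
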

\begin{proof}
See (\cite{Graham-Kolesnik}, Ch. 3).
\end{proof}

\begin{lemma}\label{Baker-Weingartnerest} Let $\theta$, $\lambda$ be real numbers such that
\begin{equation*}
\theta(\theta-1)(\theta-2)\lambda(\lambda-1)(\theta+\lambda-2)(\theta+\lambda-3)
(\theta+2\lambda-3)(2\theta+\lambda-4)\neq0\,.
\end{equation*}
Set
\begin{equation*}
\Sigma_I=\sum\limits_{m\sim M}a(m)\sum\limits_{l\in I_m}e(Bm^\lambda l^\theta)\,,
\end{equation*}
where
\begin{equation*}
B>0\,, \quad M\geq1\,, \quad L\geq1\,, \quad |a(m)|\leq1\,, \quad I_m\subset(L/2,L]\,.
\end{equation*}
Let
\begin{equation*}
F=BM^\lambda L^\theta\,.
\end{equation*}
Then
\begin{align*}
\Sigma_I\ll\Big(F^{\frac{3}{14}}M^{\frac{41}{56}}L^{\frac{29}{56}}&
+F^{\frac{1}{5}}M^{\frac{3}{4}}L^{\frac{11}{20}}
+F^{\frac{1}{8}}M^{\frac{13}{16}}L^{\frac{11}{16}}\\
&+M^{\frac{3}{4}}L+ML^{\frac{3}{4}}+F^{-1}ML\Big)(ML)^\eta\,.
\end{align*}
\end{lemma}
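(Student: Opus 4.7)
The target is a Type II double exponential sum bound, and the natural strategy is a Weyl--van der Corput shift in the $m$ variable to eliminate the unknown coefficients $a_m$, followed by an exponent-pair estimate for the resulting smoothed inner sum over $l$. The three main terms in the bound correspond to three different optimal exponent pairs (equivalently, three different combinations of the van der Corput $A$- and $B$-processes), while the error terms $M^{3/4}L$, $ML^{3/4}$, and $F^{-1}ML$ are the diagonal/trivial contributions that set breakpoints for the optimization.

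\textbf{Step 1 (Shift the $m$-sum).} Apply Cauchy--Schwarz in $l$ followed by Lemma \ref{Squareout} to the $m$-sum with a parameter $Q\le M$. After expanding the square and swapping the order of summation, one is led to estimate
\[
\sum_{1\leq q\leq Q}\Big(1-\tfrac{q}{Q}\Big)\sum_{l\in J}\bigg|\sum_{m}e\big(B\big((m+q)^\lambda-m^\lambda\big)l^\theta\big)\bigg|,
\]
plus a diagonal contribution of size $ML^2/Q$. The diagonal is responsible for the $ML^{3/4}$ (and, symmetrically after a similar shift in $l$, $M^{3/4}L$) error terms once one balances $Q$.

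\textbf{Step 2 (Taylor expansion of the phase).} For $1\le q\le Q$ with $Q\ll M$, expand
\[
B\big((m+q)^\lambda-m^\lambda\big)l^\theta = B\lambda q\, m^{\lambda-1}l^\theta + O\big(Bq^{2}M^{\lambda-2}L^{\theta}\big).
\]
Provided the error is $o(1)$, the inner sum over $l$ (or, equivalently after another shift, over $m$) has $k$-th derivatives of order $(FqM^{-1})L^{-k}$, so Lemma \ref{Exponentpairs} can be applied with $Y\asymp FqM^{-1}$. This is where all the non-vanishing hypotheses on $\theta,\lambda,\theta+\lambda-2,\dots$ enter: they guarantee that after the shift the various iterated derivatives that the $A$- and $B$-processes require are genuinely of the predicted order, so the exponent pair machinery legitimately applies at each stage.

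\textbf{Step 3 (Exponent pairs and optimization).} Applying Lemma \ref{Exponentpairs} with an exponent pair $(\varkappa,\lambda_0)$ to the inner sum gives, per term, a contribution of size $(FqM^{-1})^{\varkappa}L^{\lambda_0}+(FqM^{-1})^{-1}$. Summing over $m,q$ and combining with the $ML^2/Q$ diagonal, then choosing $Q$ to balance the diagonal against the exponent-pair term, produces a bound of the shape $F^{\varkappa/(2\varkappa+2)}M^{a}L^{b}$ for suitable $a,b$. Running this through three specific exponent pairs -- corresponding, respectively, to $B$ applied to $A(BA)^{2}$, to the $A^{2}B$-type pair, and to $ABA^{2}B$-type (the exponents $3/14,1/5,1/8$ in $F$ match these three combinations in the Rankin/Graham--Kolesnik table) -- and taking the minimum yields the three listed terms $F^{3/14}M^{41/56}L^{29/56}$, $F^{1/5}M^{3/4}L^{11/20}$, and $F^{1/8}M^{13/16}L^{11/16}$. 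The $F^{-1}ML$ term collects the $(FqM^{-1})^{-1}$ contributions over $m,q$.

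\textbf{Main obstacle.} The routine pieces are the Cauchy--Schwarz/shift and the exponent-pair invocation; the genuinely delicate point is Step 2, namely verifying that the Taylor error is uniformly negligible for all relevant $q$ and that the shifted phase is still amenable to \emph{each} of the three exponent-pair processes. This is where the algebraic non-vanishing hypotheses on $\theta$ and $\lambda$ must be used repeatedly, one condition for each $A$- or $B$-step in each of the three branches of the optimization. Correctly bookkeeping which condition is invoked where -- so that no exceptional algebraic cancellation occurs at any stage of the iteration -- will be the most technical part of the argument.
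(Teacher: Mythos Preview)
The paper does not prove this lemma at all; it simply cites \cite{Baker-Weingartner}, Theorem~2. So there is nothing to compare your argument against in the paper itself --- any substantive proof you give is already more than what the paper does.

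That said, a few remarks on your sketch versus the actual Baker--Weingartner argument. First, this is a Type~I sum, not Type~II: the inner $l$-sum carries no coefficients, so one does not need Weyl differencing in $m$ to ``eliminate'' the $a_m$; one can already bound $\Sigma_I$ by $\sum_{m\sim M}\bigl|\sum_{l\in I_m}e(Bm^\lambda l^\theta)\bigr|$ trivially. The Baker--Weingartner proof exploits this by applying the $B$-process (Poisson summation) directly to the inner $l$-sum, transforming it into a new sum with an explicit stationary-phase weight, and then treating the resulting genuine double sum over $m$ and the dual variable with further $A$- and $B$-process steps (this is where their multiple-exponential-sum machinery, and the algebraic non-degeneracy conditions on $\theta,\lambda$, come in). Your Step~1 shift in $m$ would throw away the smooth structure of the $l$-sum and is not how the stated exponents are obtained; in particular the exponents $3/14,\,1/5,\,1/8$ in $F$ arise from specific iterated $A/B$ sequences applied to the \emph{two-variable} monomial phase after one $B$-step in $l$, not from single-variable exponent pairs after a Weyl shift.

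So your outline is in the right spirit (van der Corput iteration plus optimization over process sequences), but the opening move is off: start with Poisson in $l$, not Cauchy--Schwarz in $m$. For the purposes of this paper, though, a citation suffices.
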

\begin{proof}
See (\cite{Baker-Weingartner}, Theorem 2).
\end{proof}

\begin{lemma}\label{Sargos-Wuest} Let  $\alpha$, $\beta$ be real numbers such that
\begin{equation*}
\alpha\beta(\alpha-1)(\beta-1)(\alpha-2)(\beta-2)\neq0\,.
\end{equation*}
Set
\begin{equation*}
\Sigma_{II}=\sum\limits_{m\sim M}a(m)
\sum\limits_{l\sim L}b(l)e\left(F\frac{m^\alpha l^\beta}{M^\alpha L^\beta}\right)\,,
\end{equation*}
where
\begin{equation*}
F>0\,, \quad M\geq1\,, \quad L\geq1\,, \quad |a(m)|\leq1\,, \quad|b(l)|\leq1\,.
\end{equation*}
Then
\begin{align*}
\Sigma_{II}(FML)^{-\eta}&\ll(F^4M^{31}L^{34})^{\frac{1}{42}}+(F^6M^{53}L^{51})^{\frac{1}{66}}+(F^6M^{46}L^{41})^{\frac{1}{56}}\\
&+(F^2M^{38}L^{29})^{\frac{1}{40}}+(F^3M^{43}L^{32})^{\frac{1}{46}}+(FM^9L^6)^{\frac{1}{10}}\\
&+(F^2M^7L^6)^{\frac{1}{10}}+(FM^6L^6)^{\frac{1}{8}}+M^{\frac{1}{2}}L\\
&+ML^{\frac{1}{2}}+F^{-\frac{1}{2}}ML\,.
\end{align*}
\end{lemma}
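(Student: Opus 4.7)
The plan is to derive this Type II bound via the standard machinery for double exponential sums with monomial phases: Cauchy--Schwarz, Weyl--van der Corput differencing from Lemma \ref{Squareout}, and exponent pair estimates from Lemma \ref{Exponentpairs}. Each of the eleven terms in the stated bound will correspond to a different route through this machinery.

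First I would eliminate the coefficient sequence $a(m)$ by Cauchy--Schwarz in the $m$-variable. Using $|a(m)|\le 1$, this gives
\begin{equation*}
|\Sigma_{II}|^2 \leq M \sum_{m \sim M} \bigg|\sum_{l \sim L} b(l)\, e\Bigl(\frac{F m^\alpha l^\beta}{M^\alpha L^\beta}\Bigr)\bigg|^2.
\end{equation*}
Expanding the square and then applying Lemma \ref{Squareout} with a shift parameter $q$ on the $l$-variable, followed by Taylor expansion of $(l+q)^\beta$ and grouping by $q$, reduces matters to one-dimensional $m$-sums with phases essentially of the form $F\beta q\, m^\alpha l^{\beta-1}/(M^\alpha L^\beta)$. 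The diagonal contribution $q=0$ yields the term $M^{1/2}L$, and reversing the roles of $m$ and $l$ at the Cauchy--Schwarz step yields $ML^{1/2}$; the remaining off-diagonal shifts carry the essential content.

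For the off-diagonal sums I would apply Lemma \ref{Exponentpairs} with various exponent pairs $(\varkappa,\lambda)$. The trivial pair together with the second derivative test gives the term $F^{-1/2}ML$ in the regime of small $F$, while the classical van der Corput pair $(1/6, 2/3)$ produces the lower-order terms such as $(FM^9 L^6)^{1/10}$, $(F^2 M^7 L^6)^{1/10}$ and $(FM^6 L^6)^{1/8}$. Iterating the $A$- and $B$-processes (Cauchy--Schwarz differencing composed with Poisson summation) gives the higher-denominator fractions: for instance $(F^4 M^{31}L^{34})^{1/42}$ and $(F^2 M^{38}L^{29})^{1/40}$ come from two-fold compositions, while $(F^6 M^{53}L^{51})^{1/66}$ and $(F^6 M^{46}L^{41})^{1/56}$ arise from further iterations. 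The nondegeneracy condition $\alpha\beta(\alpha-1)(\beta-1)(\alpha-2)(\beta-2)\neq 0$ ensures that no relevant partial derivative of the phase degenerates, which is precisely what legitimizes every exponent pair application.

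The main obstacle is not any single one of these estimates but the bookkeeping across the entire parameter range: one must verify that the eleven listed terms together dominate the output of \emph{every} exponent pair combination, uniformly in $M,L\geq 1$ and $F>0$. Rather than rederive each term by hand, I would follow the Sargos--Wu systematic enumeration of admissible compositions of $A$- and $B$-processes, checking the balancing of exponents at each stage and confirming that the eleven terms exhaust the resulting list up to factors of $(FML)^\eta$.
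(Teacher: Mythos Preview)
The paper does not prove this lemma at all: its entire proof is a one-line citation, ``See \cite{Sargos-Wu}, Theorem 9.'' This is an imported result from Sargos and Wu, quoted verbatim and used as a black box.

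Your sketch of the underlying machinery (Cauchy--Schwarz to remove coefficients, Weyl--van der Corput differencing via Lemma~\ref{Squareout}, then iterated $A$- and $B$-processes with various exponent pairs) is broadly accurate as a description of how Sargos and Wu obtain their Theorem~9, and indeed you close by saying you would ``follow the Sargos--Wu systematic enumeration.'' So in substance you and the paper agree: the result is Sargos--Wu's, and neither you nor the author reproves it independently. The only difference is presentational --- you offer a heuristic outline of the cited argument, while the paper simply points to the reference. There is no gap in your proposal, but for the purposes of this paper a bare citation suffices.
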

\begin{proof}
See (\cite{Sargos-Wu}, Theorem  9).
\end{proof}
The next two lemmas are due to C. Hooley.
\begin{lemma}\label{Hooley1}
For any constant $\omega>0$ we have
\begin{equation*}
\sum\limits_{p\leq X}
\bigg|\sum\limits_{d|p-1\atop{\sqrt{X}(\log X)^{-\omega}<d<\sqrt{X}(\log X)^{\omega}}}
\chi_4(d)\bigg|^2\ll \frac{X(\log\log X)^7}{\log X}\,,
\end{equation*}
where the constant in Vinogradov's symbol depends on $\omega>0$.
\end{lemma}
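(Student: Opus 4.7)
The plan is to expand the square and organize by the greatest common divisor of the divisor pair. Opening the square gives
\[
T:=\sum_{p\le X}\Bigl|\sum_{\substack{d\mid p-1\\ L_1<d<L_2}}\chi_4(d)\Bigr|^2=\sum_{L_1<d_1,d_2<L_2}\chi_4(d_1)\chi_4(d_2)\,\pi(X;[d_1,d_2],1),
\]
with $L_1=\sqrt{X}(\log X)^{-\omega}$ and $L_2=\sqrt{X}(\log X)^{\omega}$. Writing $\delta=(d_1,d_2)$, $d_1=\delta a$, $d_2=\delta b$ with $(a,b)=1$ and using multiplicativity of $\chi_4$, the summand becomes $\chi_4(\delta)^2\chi_4(a)\chi_4(b)$, which is non-zero precisely when $\delta$ is odd, while the common modulus is $[d_1,d_2]=\delta ab$. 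The diagonal part ($a=b=1$) then reduces to $\sum_{L_1<\delta<L_2}\pi(X;\delta,1)$, which by Brun--Titchmarsh together with the estimate $\sum_{\delta\le L}\varphi(\delta)^{-1}=C\log L+O(1)$ applied over the short range $(L_1,L_2)$ of logarithmic width $2\omega\log\log X$ is $O(X\log\log X/\log X)$, already within the target.

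For the off-diagonal part I would split the sum according to the size of $q=\delta ab$. When $q$ is close to $X$, at most one prime $p\le X$ lies in the progression $1\pmod q$ (namely $p=1+q$, should it be prime), and the $\chi_4$-cancellation in the sum over coprime factorizations $ab=q/\delta$ is crucial to keeping this piece small. When $q$ is smaller, Brun--Titchmarsh yields $\pi(X;q,1)\ll X/(\varphi(q)\log(X/q))$, and one estimates the resulting nested arithmetic sums over $\delta,a,b$ via Mertens-type bounds, paying a factor of $\log\log X$ at each layer because the admissible ranges of $a$ and $b$ each have log-width only $O(\log\log X)$. Summing carefully across the two regimes should produce a contribution within $O(X(\log\log X)^7/\log X)$.

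The main obstacle, to my mind, is that the Bombieri--Vinogradov theorem is not directly usable in this regime: the modulus $q=\delta ab$ is always of order at least $\sqrt X/(\log X)^{3\omega}$, which exceeds the Bombieri--Vinogradov threshold $\sqrt X/(\log X)^A$ as soon as $A>3\omega$. Hence the $\chi_4$-cancellation has to be extracted by a more elementary multiplicative/combinatorial route---essentially a dispersion-method calculation in which the short logarithmic ranges of $\delta$, $a$, $b$ are exploited so that the overall loss is a bounded power of $\log\log X$ rather than a power of $\log X$. Tracking the precise exponent (here $7$) is the subtle technical heart of the lemma and is exactly what makes this a genuine input to the Linnik--Hooley analysis of primes of the form $p=x^2+y^2+1$.
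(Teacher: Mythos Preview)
The paper does not actually prove this lemma: immediately after stating it (and the companion Lemma~\ref{Hooley2}) the author simply writes ``The proofs of very similar results are available in (\cite{Hooley}, Ch.~5),'' so there is no in-paper argument to compare against. Your outline --- expand the square, sort by $\delta=(d_1,d_2)$, treat the diagonal by Brun--Titchmarsh over a range of logarithmic width $O(\log\log X)$, and extract the $\chi_4$-cancellation on the off-diagonal by a dispersion-type argument --- is indeed the skeleton of Hooley's Chapter~5 treatment, and your observation that the moduli $q=[d_1,d_2]$ live beyond the Bombieri--Vinogradov range is the correct diagnosis of why the problem is nontrivial.

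That said, what you have written is a plan rather than a proof, and the part you yourself flag as ``the subtle technical heart'' is precisely the part that is missing. In Hooley's argument the off-diagonal is not handled merely by ``Mertens-type bounds paying a $\log\log X$ at each layer''; one has to exploit the $\chi_4$-cancellation in the coprime-pair sum $\sum_{(a,b)=1}\chi_4(a)\chi_4(b)$ quantitatively, and the regime where $q=\delta ab$ is close to $X$ (so that $\pi(X;q,1)\in\{0,1\}$) requires a separate combinatorial count of admissible triples $(\delta,a,b)$. Without carrying out those two steps one cannot see why the final bound is $X(\log\log X)^7/\log X$ rather than, say, $X(\log\log X)^C/\log X$ for some unspecified $C$, or even $X/\log X$ times a small power of $\log X$. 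So your proposal is on the right track and matches the reference the paper defers to, but as written it stops short of the substantive work; to turn it into a proof you would need to reproduce (or cite precisely) the relevant estimates from Hooley, Chapter~5.
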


\begin{lemma}\label{Hooley2} Suppose that $\omega>0$ is a constant
and let $\mathcal{F}_\omega(X)$ be the number of primes $p\leq X$
such that $p-1$ has a divisor in the interval $\big(\sqrt{X}(\log X)^{-\omega}, \sqrt{X}(\log X)^\omega\big)$.
Then
\begin{equation*}
\mathcal{F}_\omega(X)\ll\frac{X(\log\log X)^3}{(\log X)^{1+2\theta_0}}\,,
\end{equation*}
where $\theta_0$ is defined by \eqref{theta0} and the constant in
Vinogradov's symbol depends only on $\omega>0$.
\end{lemma}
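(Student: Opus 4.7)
The plan is to combine the Brun--Titchmarsh inequality with the Erd\H{o}s-type observation that most integers have no divisor in a short range near $\sqrt{X}$. Set $\mathcal{I}=\bigl(\sqrt{X}(\log X)^{-\omega},\sqrt{X}(\log X)^{\omega}\bigr]$ and $r_\omega(n)=\#\{d\mid n:d\in\mathcal{I}\}$, so that $\mathcal{F}_\omega(X)=\#\{p\leq X:r_\omega(p-1)\geq 1\}$. Summing Brun--Titchmarsh over $d\in\mathcal{I}$ yields only
\begin{equation*}
\mathcal{F}_\omega(X)\leq\sum_{d\in\mathcal{I}}\pi(X;d,1)\ll\frac{X}{\log X}\sum_{d\in\mathcal{I}}\frac{1}{\varphi(d)}\ll\frac{X\log\log X}{\log X},
\end{equation*}
which falls short of the target by a factor $(\log X)^{-2\theta_0}$ (up to $\log\log$ factors).

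To recover the missing factor, I would partition the primes $p\leq X$ according to the multiplicative structure of $p-1$ below an intermediate threshold $X^{\alpha}$. Within each partition class, the existence of a divisor of $p-1$ in $\mathcal{I}$ forces a balance condition on the small prime factors of $p-1$ (namely, a selected subset of them must multiply into $\mathcal{I}$); I would then apply a weighted upper-bound sieve of Rosser--Iwaniec type to enforce primality of $p$, and sum the resulting estimates using Mertens' theorem. Optimizing the threshold $\alpha$ produces the Hooley exponent $2\theta_0=1-\tfrac{1}{2}e\log 2$, while the factor $(\log\log X)^{3}$ absorbs the logarithmic losses coming from the number of admissible classes and from edge effects at the endpoints of $\mathcal{I}$.

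The hard part will be justifying the precise exponent $2\theta_0$ rather than the naive Erd\H{o}s--Ford constant $1-(1+\log\log 2)/\log 2\approx 0.086$: the sharper value $2\theta_0\approx 0.058$ reflects both the polylogarithmic (rather than constant-logarithmic) length of $\mathcal{I}$ and the delicate interaction between the sieve for primality of $p$ and the divisor distribution of $p-1$. This optimization was carried out by Hooley in his work surrounding Linnik's theorem on primes of the form $x^2+y^2+1$, and the most efficient route here is to invoke that calculation directly rather than reproduce the variational argument from scratch.
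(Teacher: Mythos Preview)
The paper does not actually prove this lemma: immediately after stating it (together with the companion Lemma on the character sum), the author simply writes ``The proofs of very similar results are available in (\cite{Hooley}, Ch.~5).'' Your proposal, after sketching the Brun--Titchmarsh/sieve strategy and correctly identifying that the sharp exponent $2\theta_0$ requires Hooley's optimization, likewise concludes by invoking Hooley's calculation directly --- so you and the paper end up in exactly the same place.
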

The proofs of very similar results are available in (\cite{Hooley}, Ch.5).

\begin{lemma}\label{IIIest} We have
\begin{equation*}
\int\limits_{-\infty}^{\infty} I^3(t)\Theta(t)e(-Nt)\,dt\gg \varepsilon X^{3-c}\,.
\end{equation*}
\end{lemma}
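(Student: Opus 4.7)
The plan is to apply Fourier inversion, reducing the integral to a volume computation in $\mathbb{R}^3$. First I would expand $I^3(t)$ as a triple integral over $(X/2,X]^3$ and interchange the order of integration (Fubini applies because $|\Theta(t)|$ decays faster than any polynomial by Lemma \ref{Fourier} with $k=[\log X]$, so in particular $\|\Theta\|_1<\infty$). The Fourier inversion formula $\int_{-\infty}^\infty \Theta(t)e(tu)\,dt = \theta(u)$ then yields
\[
\int_{-\infty}^\infty I^3(t)\Theta(t)e(-Nt)\,dt=\iiint_{(X/2,X]^3}\theta\bigl(y_1^c+y_2^c+y_3^c-N\bigr)\,dy_1\,dy_2\,dy_3.
\]

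Next I would exploit positivity. Since $\theta\geq 0$ everywhere and $\theta(u)=1$ whenever $|u|\leq 8\varepsilon/10$, the right-hand side is bounded below by the Lebesgue measure of
\[
\mathcal{E}=\bigl\{(y_1,y_2,y_3)\in(X/2,X]^3:|y_1^c+y_2^c+y_3^c-N|\leq 8\varepsilon/10\bigr\}.
\]
To estimate $|\mathcal{E}|$ I would fix $(y_2,y_3)$ in the auxiliary set
\[
\Omega=\bigl\{(y_2,y_3)\in(X/2,X]^2:N-y_2^c-y_3^c\in((X/2)^c+\varepsilon,\,X^c-\varepsilon)\bigr\}
\]
and sweep $y_1$ over the admissible arc. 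Because $\tfrac{d}{dy}y^c=cy^{c-1}\asymp X^{c-1}$ on $(X/2,X]$, for each $(y_2,y_3)\in\Omega$ the set of admissible $y_1$ is an arc of length $\asymp\varepsilon X^{1-c}$ lying properly inside $(X/2,X]$.

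The one step deserving verification is that $|\Omega|\gg X^2$, and this is the only non-routine point. Recalling $N=2X^c$, membership in $\Omega$ amounts to requiring $y_2^c+y_3^c\in(X^c+O(\varepsilon),(2-2^{-c})X^c-O(\varepsilon))$, which is a nonempty open subinterval of the image $(2(X/2)^c,2X^c]=(2^{1-c}X^c,2X^c]$ precisely because $2^{1-c}<1<2-2^{-c}$ for $c>1$. An elementary two-variable change of variables (fix $y_2$ and parametrize $y_3$ by its $c$-th power, using that the Jacobian of $y\mapsto y^c$ is $\asymp X^{c-1}$) then gives $|\Omega|\asymp X^2$. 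Combining, $|\mathcal{E}|\gg|\Omega|\cdot\varepsilon X^{1-c}\gg\varepsilon X^{3-c}$, which is the claimed lower bound. The remaining ingredients (Fubini, Fourier inversion, the one-dimensional length estimate) are entirely standard.
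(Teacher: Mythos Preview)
Your argument is correct and follows the standard route (Fourier inversion reduces the integral to a smoothed volume, then positivity of $\theta$ and a slicing argument give the lower bound $\gg \varepsilon X^{3-c}$); this is essentially the approach in Tolev's Lemma~6, which the paper simply cites. One cosmetic remark: the decay of $\Theta$ is $|x|^{-(k+1)}$ for the fixed $k=[\log X]$, not literally ``faster than any polynomial'', but since $k\geq 1$ this is more than enough for $\Theta\in L^1$ and Fubini.
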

\begin{proof}
See (\cite{Tolev1}, Lemma 6).
\end{proof}

\section{Outline of the proof}
\indent

Consider the sum
\begin{equation}\label{Gamma}
\Gamma(X)=
\sum\limits_{X/2<p_1,p_2,p_3\leq X\atop{|p_1^c+p_2^c+p_3^c-N|<\varepsilon}}
r(p_1-1)\log p_1\log p_2\log p_3\,.
\end{equation}
Obviously
\begin{equation}\label{GammaGamma0}
\Gamma(X)\geq\Gamma_0(X)\,,
\end{equation}
where
\begin{equation}\label{Gamma0}
\Gamma_0(X)=\sum\limits_{X/2<p_1,p_2,p_3\leq X}r(p_1-1)
\theta\big(p_1^c+p_2^c+p_3^c-N\big)\log p_1 \log p_2\log p_3\,.
\end{equation}
From \eqref{Gamma0} and well-known identity
\begin{equation*}
r(n)=4\sum_{d|n}\chi_4(d)
\end{equation*}
we obtain
\begin{equation} \label{Gamma0decomp}
\Gamma_0(X)=4\big(\Gamma_1(X)+\Gamma_2(X)+\Gamma_3(X)\big),
\end{equation}
where
\begin{align}
\label{Gamma1}
&\Gamma_1(X)=\sum\limits_{X/2<p_1,p_2,p_3\leq X}
\left(\sum\limits_{d|p_1-1\atop{d\leq D}}\chi_4(d)\right)
\theta\big(p_1^c+p_2^c+p_3^c-N\big)\log p_1\log p_2\log p_3\,,\\
\label{Gamma2}
&\Gamma_2(X)=\sum\limits_{X/2<p_1,p_2,p_3\leq X}
\left(\sum\limits_{d|p_1-1\atop{D<d<X/D}}\chi_4(d)\right)
\theta\big(p_1^c+p_2^c+p_3^c-N\big)\log p_1\log p_2\log p_3\,,\\
\label{Gamma3}
&\Gamma_3(X)=\sum\limits_{X/2<p_1,p_2,p_3\leq X}
\left(\sum\limits_{d|p_1-1\atop{d\geq X/D}}\chi_4(d)\right)
\theta\big(p_1^c+p_2^c+p_3^c-N\big)\log p_1\log p_2\log p_3\,.
\end{align}
In order to estimate $\Gamma_1(X)$ and $\Gamma_3(X)$ we need to consider
the sum
\begin{equation} \label{Ild}
I_{l,d;J}(X)=\sum\limits_{X/2<p_2,p_3\leq X\atop{p_1\equiv l\,(d)
\atop{p_1\in J}}}\theta\big(p_1^c+p_2^c+p_3^c-N\big)
\log p_1\log p_2\log p_3\,,
\end{equation}
where $l$ and $d$ are coprime natural numbers, and $J\subset(X/2,X]$ is an interval.
If $J=(X/2,X]$ then we write for simplicity $I_{l,d}(X)$.

Using the inverse Fourier transform for the function $\theta(y)$ we deduce
\begin{align*}
I_{l,d;J}(X)&=\sum\limits_{X/2<p_2,p_3\leq X\atop{p_1\equiv l\,(d)\atop{p_1\in J}}}
\log p_1\log p_2\log p_3
\int\limits_{-\infty}^{\infty}\Theta(t)e\big((p_1^c+p_2^c+p_3^c-N)t\big)\,dt\\
&=\int\limits_{-\infty}^{\infty}
\Theta(t)S^2(t)S_{l,d;J}(t)e(-Nt)\,dt\,.
\end{align*}
We decompose $I_{l,d;J}(X)$ over major, minor and trivial arcs as follows
\begin{equation}\label{Ilddecomp}
I_{l,d;J}(X)=I^{(1)}_{l,d;J}(X)+I^{(2)}_{l,d;J}(X)+I^{(3)}_{l,d;J}(X)\,,
\end{equation}
where
\begin{align}
\label{Ild1}
&I^{(1)}_{l,d;J}(X)=\int\limits_{-\Delta}^{\Delta}\Theta(t)S^2(t)S_{l,d;J}(t)e(-Nt)\,dt\,,\\
\label{Ild2}
&I^{(2)}_{l,d;J}(X)=\int\limits_{\Delta\leq|t|\leq H}\Theta(t)S^2(t)S_{l,d;J}(t)e(-Nt)\,dt\,,\\
\label{Ild3}
&I^{(3)}_{l,d;J}(X)=\int\limits_{|t|>H}\Theta(t)S^2(t)S_{l,d;J}(t)e(-N t)\,dt\,.
\end{align}
We shall estimate $I^{(1)}_{l,d;J}(X)$, $I^{(3)}_{l,d;J}(X)$,
$\Gamma_3(X),\,\Gamma_2(X)$ and $\Gamma_1(X)$, respectively,
in the sections \ref{SectionIld1}, \ref{SectionIld3},
\ref{SectionGamma3}, \ref{SectionGamma2} and \ref{SectionGamma1}.
In section \ref{Sectionfinal} we shall finalize the proof of Theorem \ref{Theorem}.

\section{Asymptotic formula for $\mathbf{I^{(1)}_{l,d;J}(X)}$}\label{SectionIld1}
\indent

A key point in the proof of our theorem is the following  Bombieri -- Vinogradov
type result for exponential sums over primes.
\begin{lemma}\label{Bomb-Vin-Dim} Let $1<c<3$, $c\neq2$, $0<\mu<1$, $|t|\leq X^{\frac{1}{4}-c}$, $A>0$ and $X>2$. Then 
\begin{equation*}
\sum\limits_{d\le \sqrt{X}/(\log X)^{2A+10}}\max\limits_{y\le X}\max\limits_{(a,\, d)=1}\Bigg|\sum\limits_{\mu y<p\leq y\atop{p\equiv a\, ( d)}}e(t p^c)\log p
-\frac{1}{\varphi(d)}\int\limits_{\mu y}^{y}e(t x^c)\,dx\Bigg|\ll\frac{X}{\log^AX}\,.
\end{equation*}
\end{lemma}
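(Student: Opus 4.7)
The plan is to adapt the Bombieri--Vinogradov theorem to the exponential-sum setting by combining Lemma \ref{Upperbound} (as a Siegel--Walfisz input for small moduli) with Heath-Brown's decomposition of $\Lambda$ and the large sieve (for large moduli). First, I would pass from primes to $\Lambda$ (prime powers contribute $O(\sqrt{X}\log X)$) and apply character orthogonality modulo $d$. Isolating the principal character yields exactly the main term $\varphi(d)^{-1}\int_{\mu y}^{y}e(tx^c)\,dx$ up to an error $\ll X e^{-(\log X)^{1/5}}$ per modulus by Lemma \ref{SIasympt}, with the contribution from $(n,d)>1$ absorbed into this error. Passing from general non-principal $\chi\bmod d$ to underlying primitive $\chi^{*}\bmod q\mid d$ (the induced/primitive difference contributes $O(\tau(d)\log^{2}X)$ via primes $p\mid d/q$) reduces the task to bounding
\[
\mathcal{R}(t)=(\log X)\sum_{q\le D_{1}}\frac{1}{\varphi(q)}\sideset{}{^{*}}\sum_{\chi(q)}\max_{y\le X}|\Psi(y,\chi,t)|,\qquad D_{1}=\frac{\sqrt{X}}{(\log X)^{A+5}},
\]
by $\ll X/\log^{A}X$, where $\Psi$ is as in \eqref{Psi}.

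I would split the range of $q$ at $D_{0}=X^{23/300-\eta}$. For $q\le D_{0}$, since $|t|\le\Delta=X^{1/4-c}$ corresponds to $\xi=1/4$ in Lemma \ref{Upperbound}, the choice $\delta=23/300-\eta$ satisfies $\xi+3\delta<12/25$ and Lemma \ref{Upperbound} yields $\ll X/\log^{B}X$ for any $B>0$. For $D_{0}<q\le D_{1}$, Heath-Brown's identity (Lemma \ref{Heath-Brown}) decomposes $\Psi(y,\chi,t)$ into $O(\log^{10}X)$ Type I and Type II bilinear sums; the uniformity in $y$ is removed by inserting a smooth cutoff together with a short Perron contour (or Gallagher's trick), at the cost of a factor $\log X$. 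Type II sums $\sum_{n}c_{n}\chi(n)e(tn^{c})$ with $c_{n}=\sum_{ml=n,\,m\sim M,\,l\sim L}a_{m}b_{l}$ are attacked by Cauchy--Schwarz over $(q,\chi)$ followed by the large sieve (Lemma \ref{largesieve}): the weight $e(tn^{c})$ has modulus one and is absorbed into the coefficients without loss, so $\sum_{n}|c_{n}e(tn^{c})|^{2}=\sum_{n}|c_{n}|^{2}\ll X(\log X)^{O(1)}$ by the divisor bound, while a careful choice of $U,V,Z$ balances $M,L$ against $D_{1}\asymp X^{1/2}$ to extract the required saving from the bilinear structure. Type I sums $\sum_{m\sim M}a_{m}\sum_{l\sim L}\chi(l)e(t(ml)^{c})$ are handled by partial summation on $l$ (whose kernel has derivative $\ll|t|X^{c-1}=X^{-3/4}$, hence is essentially constant over long ranges), reducing the inner sum to incomplete character sums bounded by $\sqrt{q}\log q$ via the P\'olya--Vinogradov inequality.

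The main obstacle is the Type II analysis for $q$ near $D_{1}$, where the large sieve only barely breaks even: the Heath-Brown parameters $(U,V,Z)$ must be tuned so that every length $L$ emerging in the decomposition is correctly routed to a workable Type I (with $L$ large enough for partial summation to prevail) or Type II (with $L$ not too close to $\sqrt{X}/q$) sum, and the logarithmic losses from each step (orthogonality, primitivity, Cauchy--Schwarz, large sieve, Heath-Brown decomposition) must all be absorbed into the $(\log X)^{A+5}$ factor built into $D_{1}$. The extra oscillation $e(tn^{c})$ itself is benign: the large sieve is indifferent to unimodular weights, and in the partial-summation step the smallness of $|t|\le X^{1/4-c}$ guarantees no loss. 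Combining the contributions of both ranges and recalling the factor $(\log X)$ lost in the primitive reduction then gives $\mathcal{R}(t)\ll X/\log^{A}X$, which is the claim.
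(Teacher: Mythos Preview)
Your overall architecture (pass to $\Lambda$, orthogonality, reduce to primitive characters, split small/large moduli, Siegel--Walfisz for small, bilinear decomposition plus large sieve for large) matches the paper. The gap is in the Type II step, and it is fatal as written.

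You say the weight $e(tn^{c})$ ``has modulus one and is absorbed into the coefficients without loss.'' That is true for $\sum_{n}|c_{n}|^{2}$, but it destroys exactly the structure the large sieve needs. In the classical Bombieri--Vinogradov argument the saving for a Type II sum comes from the factorisation
\[
\sum_{m\sim M}a_{m}\sum_{l\sim L}b_{l}\,\chi(ml)=\Big(\sum_{m}a_{m}\chi(m)\Big)\Big(\sum_{l}b_{l}\chi(l)\Big),
\]
followed by Cauchy over $(q,\chi)$ and two applications of Lemma~\ref{largesieve}, which produces the bound $\big((M+Q^{2})(L+Q^{2})\big)^{1/2}(\ldots)$. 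With $e(t(ml)^{c})$ present there is no such factorisation: $e(t(ml)^{c})\neq f(m)g(l)$. If you instead absorb the exponential and apply a single large sieve to $c_{n}=e(tn^{c})\sum_{ml=n}a_{m}b_{l}$, you get only $(X+Q^{2})\sum_{n}|c_{n}|^{2}\asymp X^{2}$, and Cauchy over characters returns $X^{1+\eta}$---no log-power saving at all. So ``a careful choice of $U,V,Z$'' cannot rescue the Type II sums; the bilinear structure you appeal to is no longer there.

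The paper resolves this differently: it takes the Vaughan parameter $u=(\log X)^{2A+12}$ to be only polylogarithmic (so the genuine Type~I pieces $U_{1},U_{2}$ have outer length $\le u$ and P\'olya--Vinogradov with partial summation gives $X^{1/4}uR^{3/2}\ll X/\log^{A}X$), and for $U_{3},U_{4}$ it \emph{strips off} $e(tn^{c})$ before the large sieve. Concretely, Abel summation in $l$ followed by Perron's formula converts the bilinear sum into
\[
\Big(\sum_{l\sim L}\frac{\Lambda(l)\chi(l)}{l^{s}}\Big)\Big(\sum_{d}\frac{a(d)\chi(d)}{d^{s}}\Big)
\]
times an integral of $x^{s-1}e(tx^{c})$ controlled by Lemma~\ref{Isest}; the smallness $|t|\le X^{1/4-c}$ enters here to keep that integral $O(1)$. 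Now the two Dirichlet polynomials \emph{do} factor over $\chi$, Cauchy plus two large sieves give $\big((L+R^{2})(X/L+R^{2})\big)^{1/2}X^{1/2}(\log X)^{O(1)}$, and partial summation in $r$ over $R_{0}<r\le R$ produces the terms $XR_{0}^{-1}$, $Xu^{-1/2}$, $X^{1/2}R$, each $\ll X/\log^{A}X$. Your proposal skips this separation step, and without it the argument does not close.
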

\begin{proof}
First we shall prove that if $0<\mu<1$, $c>1$, $B>0$, $C>0$ and $|t|\leq X^{\frac{2}{3}-c-\delta}$ for a sufficiently small $\delta>0$, then
\begin{equation}\label{0SumPsixchit2}
\sum\limits_{1<q\le \log^CX}\frac{1}{\varphi(q)}\sideset{}{^*}\sum\limits_{\chi(q)}\Bigg|\sum\limits_{\mu X<n\leq X}\Lambda(n)\chi(n)e(t n^c)\Bigg|\ll \frac{X}{\log^BX}\,.
\end{equation}
We consider two cases.

\textbf{Case 1} 

\begin{equation}\label{0tXcB}
|t|\leq X^{-c}(\log X)^{2B+34}\,.
\end{equation}
From (\cite{Davenport}, p. 132) we have that if $q\leq\log^AX$, then
\begin{equation}\label{Davenport}
\sum\limits_{n\leq X}\Lambda(n)\chi(n)\ll\frac{X}{e^{c(\log X)^{1/2}}}
\end{equation}
for any nonprincipal character $\chi$ modulo $q$. Now \eqref{0tXcB}, \eqref{Davenport} and Abel’s summation formula lead to
\begin{align*}
&\sum\limits_{1<q\le \log^CX}\frac{1}{\varphi(q)}\sideset{}{^*}\sum\limits_{\chi(q)}\Bigg|\sum\limits_{\mu X<n\leq X}\Lambda(n)\chi(n)e(t n^c)\Bigg|\nonumber\\
&=\sum\limits_{1<q\le \log^CX}\frac{1}{\varphi(q)}\sideset{}{^*}\sum\limits_{\chi(q)}\Bigg|e(tX^c)\sum\limits_{\mu X<n\leq X}\Lambda(n)\chi(n)\nonumber\\
&\hspace{40mm}-\int\limits_{\mu X}^X\left(\sum\limits_{\mu X<n\leq y}\Lambda(n)\chi(n)\right)\frac{d}{dy}e(t y^c)\,dy\Bigg|\nonumber\\
&\ll\big(1+|t|X^c\big)\frac{X}{e^{(\log X)^{1/3}}}\ll\frac{X}{e^{(\log X)^{1/4}}}\,.
\end{align*}

\textbf{Case 2} 

\begin{equation*}
X^{-c}(\log X)^{2B+34}\leq|t|\leq X^{\frac{2}{3}-c-\delta}
\end{equation*}
This case follows from Lemma \ref{SumPsixchit1}. This proves the inequality \eqref{0SumPsixchit2}. We will use the formula
\begin{equation}\label{0Lambdalog}
\sum\limits_{\mu y<p\leq y\atop{p\equiv a\, (d)}}e(t p^c)\log p=\sum\limits_{\mu y<n\leq y\atop{n\equiv a\, (d)}}\Lambda(n)e(t n^c)+\mathcal{O}\left(\frac{y^{\frac{1}{2}+\varepsilon}}{d}\right)
\end{equation}
for $d\le y^{\frac{1}{2}}$. Define
\begin{equation}\label{0deltachi}
\delta(\chi)=\begin{cases}1 \quad\mbox{ if $\chi$ is principal },\\
0\quad\mbox{ otherwise }.
\end{cases}
\end{equation}
By the orthogonality of characters we have
\begin{align*}
&\sum\limits_{\mu y<n\leq y\atop{n\equiv a\, (d)}}\Lambda(n)e(t n^c)
-\frac{1}{\varphi(d)}\int\limits_{\mu y}^{y}e(t x^c)\,dx\\
&=\sum\limits_{\mu y<n\leq y}\Lambda(n)e(t n^c)
\frac{1}{\varphi(d)}\sum\limits_{\chi(d)}\chi(n)\overline{\chi}(a)
-\frac{1}{\varphi(d)}\int\limits_{\mu y}^{y}e(t x^c)\,dx\\
&=\frac{1}{\varphi(d)}\sum\limits_{\chi(d)}\left(\overline{\chi}(a)
\sum\limits_{\mu y<n\leq y}\Lambda(n)\chi(n)e(t n^c)
-\delta(\chi)\int\limits_{\mu y}^{y}e(t x^c)\,dx\right)
\end{align*}
and therefore
\begin{align}\label{0maxLambdaInt}
&\max\limits_{(a,\, d)=1}\bigg|\sum\limits_{\mu y<n\leq y\atop{n\equiv a\, (d)}}\Lambda(n)e(t n^c)
-\frac{1}{\varphi(d)}\int\limits_{\mu y}^{y}e(t x^c)\,dx\bigg|\nonumber\\
&\leq\frac{1}{\varphi(d)}\sum\limits_{\chi(d)}\bigg|\Psi(y,\chi,t)
-\delta(\chi)\int\limits_{\mu y}^{y}e(t x^c)\,dx\bigg|\,,
\end{align}
where $\Psi(y,\chi,t)$ is defined by \eqref{Psi}.
Denote
\begin{equation}\label{0Sigma}
\Sigma=\sum\limits_{d\le \sqrt{X}/(\log X)^{2A+10}}\max\limits_{y\le X}\max\limits_{(a,\, d)=1}\big|E(y,t,d,a)\big|\,.
\end{equation}
From \eqref{Eytda}, \eqref{0deltachi}, \eqref{0maxLambdaInt} and \eqref{0Sigma} we  obtain
\begin{equation}\label{0Sigmadecomp}
\Sigma\leq\Sigma'+\Sigma''\,,
\end{equation}
where
\begin{align}
\label{0Sigma'}
&\Sigma'=\sum\limits_{d\le \sqrt{X}/(\log X)^{2A+10}}\frac{1}{\varphi(d)}\max\limits_{y\le X}\bigg|\sum\limits_{\mu y<n\leq y}\Lambda(n)e(t n^c)-\int\limits_{\mu y}^{y}e(t x^c)\,dx+\mathcal{O}\Big(\log^2y\Big)\bigg|\,,\\
\label{0Sigma''}
&\Sigma''=\sum\limits_{d\le \sqrt{X}/(\log X)^{2A+10}}\frac{1}{\varphi(d)}\sum\limits_{\chi(d)\atop{\chi\neq\chi_0}}\max\limits_{y\le X}\big|\Psi(y,\chi,t)\big|\,.
\end{align}
By \eqref{0Lambdalog}, \eqref{0Sigma'} and Lemma \ref{SIasympt} we find
\begin{equation}\label{0Sigma'est}
\Sigma'\ll\frac{X}{e^{(\log X)^{1/5}}}\sum\limits_{d\le \sqrt{X}/(\log X)^{2A+10}}\frac{1}{\varphi(d)}\ll\frac{X}{\log^AX}\,.
\end{equation}
Next we consider $\Sigma''$. Moving to primitive characters from \eqref{0Sigma''} we deduce
\begin{align}\label{0Sigma''primitive}
\Sigma''&\ll\sum\limits_{d\le \sqrt{X}/(\log X)^{2A+10}}\frac{1}{\varphi(d)}
\sum\limits_{r|d\atop{r>1}}\sideset{}{^*}\sum\limits_{\chi(r)}\max\limits_{y\le X}\big|\Psi(y,\chi,t)\big|+\frac{\sqrt{X}}{(\log X)^{2A+8}}\nonumber\\
&\ll\sum\limits_{r\le \sqrt{X}/(\log X)^{2A+10}}\left(\sum\limits_{d\le \sqrt{X}/(\log X)^{2A+10}\atop{d\equiv 0\, ( r)}}\frac{1}{\varphi(d)}\right)
\sideset{}{^*}\sum\limits_{\chi(r)}\max\limits_{y\le X}\big|\Psi(y,\chi,t)\big|+\frac{\sqrt{X}}{(\log X)^{2A+8}}\nonumber\\
&\ll(\log X)\sum\limits_{1<r\le \sqrt{X}/(\log X)^{2A+10}}\frac{1}{\varphi(r)}\sideset{}{^*}\sum\limits_{\chi(r)}\max\limits_{y\le X}\big|\Psi(y,\chi,t)\big|+\frac{\sqrt{X}}{(\log X)^{2A+8}}\nonumber\\
&=(\Omega_1+\Omega_2)\log X+\frac{\sqrt{X}}{(\log X)^{2A+8}}\,,
\end{align}
where
\begin{align}
\label{0Omega1}
&\Omega_1=\sum\limits_{r\le R_0}\frac{1}{\varphi(r)}\sideset{}{^*}\sum\limits_{\chi(r)}\max\limits_{y\le X}\big|\Psi(y,\chi,t)\big|\,,\\
\label{0Omega2}
&\Omega_2=\sum\limits_{R_0<r\le R}\frac{1}{\varphi(r)}\sideset{}{^*}\sum\limits_{\chi(r)}\max\limits_{y\le X}\big|\Psi(y,\chi,t)\big|\,,\\
\label{0R0R}
&R_0=(\log X)^{A+5}\,, \quad R=\frac{\sqrt{X}}{(\log X)^{2A+10}}\,.
\end{align}
Taking into account \eqref{Psi}, \eqref{0SumPsixchit2} with $B=A+1$, \eqref{0Omega1} and \eqref{0R0R} we obtain
\begin{equation}\label{0Omega1est}
\Omega_1\ll\frac{X}{(\log X)^{A+1}}\,.
\end{equation}
Next we consider $\Omega_2$. Let $1<u\leq \mu y$ be a parameter that we will choose later. Using \eqref{Psi} and Vaughan's identity (see \cite{Vaughan}) we get
\begin{equation}\label{0Psidecomp}
\Psi(y,\chi,t)=U_1(y,\chi,t)-U_2(y,\chi,t)-U_3(y,\chi,t)-U_4(y,\chi,t)\,,
\end{equation}
where
\begin{align}
\label{0U1}
&U_1(y,\chi,t)=\sum_{d\le u}\mu(d)\sum_{\mu y<dl\le y}\chi(dl)e(td^cl^c)\log l\,,\\
\label{0U2}
&U_2(y,\chi,t)=\sum_{d\le u}c(d)\sum_{\mu y<dl\le y}\chi(dl)e(td^cl^c)\,,\\
\label{0U3}
&U_3(y,\chi,t)=\sum_{u<d\le u^2}c(d)\sum_{\mu y<dl\le y}\chi(dl)e(td^cl^c)\,,\\
\label{0U4}
&U_4(y,\chi,t)= \mathop{\sum\sum}_{\substack{d>u,\,l>u\\\mu y<dl\le y }}a(d)\Lambda(l) \chi(dl)e(td^cl^c)\,,
\end{align}
and where
\begin{equation}\label{0cdad}
|c(d)|\leq\log d\,,\quad  | a(d)|\leq\tau(d)\,.
\end{equation}
Now \eqref{0Omega2}, \eqref{0Psidecomp} -- \eqref{0U4} give us
\begin{equation}\label{0Omega2est1}
\Omega_2\ll \Omega^{(1)}_2+\Omega^{(2)}_2+\Omega^{(3)}_2+\Omega^{(4)}_2\,,
\end{equation}
where
\begin{equation}\label{0Omegaj}
\Omega^{(j)}_2=\sum\limits_{R_0<r\le R}\frac{1}{\varphi(r)}\sideset{}{^*}\sum\limits_{\chi(r)}\max\limits_{y\le X}\big|U_j(y,\chi,t)\big|\,, \quad j=1,\,2,\,3,\,4\,.
\end{equation}

\newpage

\textbf{Estimation of $\mathbf{\Omega^{(1)}_2}$ and $\mathbf{\Omega^{(2)}_2}$}

From \eqref{0R0R}, \eqref{0U1}, \eqref{0Omegaj}, Abel's summation formula and Lemma \ref{Polya–Vinogradov} it follows that

\begin{align}\label{0Omega21est}
\Omega^{(1)}_2&\ll\sum\limits_{R_0<r\le R}\frac{1}{\varphi(r)}\sideset{}{^*}\sum\limits_{\chi(r)}
\max\limits_{y\le X}\sum_{d\le u}\bigg|\sum_{\mu y/d<l\le y/d}\chi(dl)e(td^cl^c)\log l\bigg|\nonumber\\
&\ll X^{\frac{1}{4}}(\log X)\sum\limits_{R_0<r\le R}\frac{1}{\varphi(r)}\sideset{}{^*}\sum\limits_{\chi(r)}
\sum_{d\le u}\max\limits_{y\le X}\max\limits_{\mu y/d<x\le y/d}\bigg|\sum_{\mu y/d<l\le x}\chi(l)\bigg|\nonumber\\
&\ll X^{\frac{1}{4}}(\log X)u\sum\limits_{R_0<r\le R}r^{\frac{1}{2}}\log r\nonumber\\
&\ll X^{\frac{1}{4}} u R^{\frac{3}{2}}\log^2X\,.
\end{align}
Working in a similar way we deduce
\begin{equation}\label{0Omega22est}
\Omega^{(2)}_2\ll X^{\frac{1}{4}} u R^{\frac{3}{2}}\log X\,.
\end{equation}

\textbf{Estimation of $\mathbf{\Omega^{(3)}_2}$ and $\mathbf{\Omega^{(4)}_2}$}

We split the range of $l$ of the exponential sum \eqref{0U4} into dyadic subintervals of the form $L<l\leq 2L$, where $u<L\leq y/2u$.
Further we use \eqref{0cdad}, Abel's summation formula, Perron's formula (see \cite{Tenenbaum}, Chapter II.2, Theorem 1) with parameters
\begin{equation}\label{0kappaT}
\varkappa=\frac{1}{\log X}\,, \quad T=X^2
\end{equation}
and partial integration to find
\begin{align}\label{0U4est1}
U_4(y,\chi,t)&\ll (\log X)\bigg|\sum_{l\sim L}\sum_{\mu y/l<d\leq y/ l}\Lambda(l)a(d)\chi(dl)e(td^cl^c)\bigg|\nonumber\\
&=(\log X)\Bigg|e(ty^c)\sum_{l\sim L}\sum_{\mu y/l<d\leq y/ l}\Lambda(l)a(d)\chi(dl)\nonumber\\
&-\int\limits_{\mu y/l}^{y/l}\left(\sum_{l\sim L}\sum_{\mu y/l<d\leq x}\Lambda(l)a(d)\chi(dl)\right)\,de(tx^cl^c)\Bigg|\nonumber\\
&=(\log X)\big|e(ty^c)\mathfrak{X}_1-\mathfrak{X}_2\big|\,,
\end{align}
where
\begin{align}\label{0mathfrakX1}
\mathfrak{X}_1&=\frac{1}{2\pi i}\int\limits_{1-\varkappa-iT}^{1+\varkappa+iT}\sum_{l\sim L}\sum_{\mu y/2L<d\leq X/L}\frac{\Lambda(l)a(d)\chi(dl)}{(dl)^s}\frac{y^s}{s}\,ds\nonumber\\
&\hspace{50mm}+\mathcal{O}\Bigg(\sum_{l\sim L}\sum\limits_{\mu y/2L<d\leq y/L}\frac{y^{1+\varkappa}\Lambda(l)\tau(d)}{(dl)^{1+\varkappa}\big(1+T\left|\log\frac{y}{dl}\right|\big)}\Bigg)\,,
\end{align}

\begin{align}\label{0mathfrakX2}
\mathfrak{X}_2&=\int\limits_{\mu y/l}^{y/l}\left(\frac{1}{2\pi i}\int\limits_{1-\varkappa-iT}^{1+\varkappa+iT}\sum_{l\sim L}\sum_{\mu y/2L<d\leq X/L}\frac{\Lambda(l)\chi(dl)a(d)}{(dl)^s}\frac{x^s}{s}\,ds\right.\nonumber\\
&\left.\hspace{30mm}+\mathcal{O}\Bigg(\sum_{l\sim L}\sum\limits_{\mu y/2L<d\leq y/L}\frac{x^{1+\varkappa}\Lambda(l)\tau(d)}{(dl)^{1+\varkappa}\left(1+T\left|\log\frac{x}{dl}\right|\right)}\Bigg)\right)\,de(tx^cl^c)\,.
\end{align}
If we assume, as we may, that $\|y\|=\frac{1}{2}$, we have $|\log\frac{y}{dl}|\geq\frac{1}{y}$.
Now \eqref{0kappaT}, \eqref{0mathfrakX1}, \eqref{0mathfrakX2}, partial integration and the well known inequalities 
\begin{equation}\label{Lambdatauest}
\sum\limits_{n\le X} \Lambda(n)\ll X\,, \quad \sum\limits_{n\le X} \tau(n)\ll X\log X    
\end{equation}
imply
\begin{align}
\label{0mathfrakX1est1}
&\mathfrak{X}_1=\frac{1}{2\pi i}I_1+\mathcal{O}\big(\log X\big)\,,\\
\label{0mathfrakX2est1}
&\mathfrak{X}_2=\frac{1}{2\pi i}\big(e(t\mu^cy^c)I_2-e(ty^c)I_3+I_4\big)+\mathcal{O}\Big(X^{\frac{1}{4}}u^{-1}\log X\Big)\,,
\end{align}
where
\begin{align}
\label{0I1}
&I_1=\int\limits_{1-\varkappa-iT}^{1+\varkappa+iT}\sum_{l\sim L}\sum_{\mu y/2L<d\leq X/L}\frac{\Lambda(l)a(d)\chi(dl)}{(dl)^s}\frac{y^s}{s}\,ds\,,\\
\label{0I2}
&I_2=\int\limits_{1-\varkappa-iT}^{1+\varkappa+iT}\sum_{l\sim L}\sum_{\mu y/2L<d\leq X/L}\frac{\Lambda(l)a(d)\chi(dl)}{(dl^2)^s}\frac{\mu^sy^s}{s}\,ds\,,\\
\label{0I3}
&I_3=\int\limits_{1-\varkappa-iT}^{1+\varkappa+iT}\sum_{l\sim L}\sum_{\mu y/2L<d\leq X/L}\frac{\Lambda(l)a(d)\chi(dl)}{(dl^2)^s}\frac{y^s}{s}\,ds\,,\\
\label{0I4}
&I_4=\int\limits_{1-\varkappa-iT}^{1+\varkappa+iT}\sum_{l\sim L}\sum_{\mu y/2L<d\leq X/L}\frac{\Lambda(l)a(d)\chi(dl)}{(dl^2)^s}\left(\int\limits_{\mu y}^{y}x^{s-1}e(tx^c)\,dx\right)\,ds\,.
\end{align}
Put $s=\beta+i\gamma$. Using \eqref{0kappaT}, \eqref{Lambdatauest}, \eqref{0I1} -- \eqref{0I4}, Lemma \ref{Isest} and Cauchy's integral theorem for the rectangle      
\begin{equation*}
\{\varkappa\leq\beta\leq1+\varkappa\,,\; -T\leq\gamma\leq T\}    
\end{equation*}
we derive 
\begin{align}
\label{0I1est1}
&I_1\ll \left|\sum_{l\sim L}\sum_{\mu y/2L<d\leq X/L}\frac{\Lambda(l)a(d)\chi(dl)}{(dl)^{\varkappa+i\gamma_1}}\right|\log X+\log X\,,\\
\label{0I23est1}
&I_2, I_3\ll \left|\sum_{l\sim L}\sum_{\mu y/2L<d\leq X/L}\frac{\Lambda(l)a(d)\chi(dl)}{(dl^2)^{\varkappa+i\gamma_2}}\right|\log X+\log X\,,\\
\label{0I4est1}
&I_4\ll |I_5|+\log X\,,
\end{align}
for some $|\gamma_1|, |\gamma_2|\leq T$, where
\begin{equation}\label{0I5}
I_5=\int\limits_{-T}^{T}\sum_{l\sim L}\sum_{\mu y/2L<d\leq X/L}\frac{\Lambda(l)a(d)\chi(dl)}{(dl^2)^{\varkappa+i\gamma}}\left(\int\limits_{\mu y}^{y}x^{\varkappa-1+i\gamma}e(tx^c)\,dx\right)\,d\gamma\,.
\end{equation}
Firstly, consider the case 
\begin{equation}\label{0case1}
4\pi c|t|y^c\leq\log y\,.
\end{equation}
By \eqref{0kappaT}, \eqref{0I5}, \eqref{0case1} and Lemma \ref{Isest} we obtain
\begin{align}\label{0I5est1}
I_5&\ll\left|\sum_{l\sim L}\sum_{\mu y/2L<d\leq X/L}\frac{\Lambda(l)a(d)\chi(dl)}{(dl^2)^{\varkappa+i\gamma_3}}\right|
\left(\int\limits_{0}^{\log y}y^\varkappa\,d\gamma+\int\limits_{\log y}^{T}\frac{y^\varkappa}{\gamma}\,d\gamma\right)\nonumber\\
&\ll\left|\sum_{l\sim L}\sum_{\mu y/2L<d\leq X/L}\frac{\Lambda(l)a(d)\chi(dl)}{(dl^2)^{\varkappa+i\gamma_3}}\right|\log X\,,
\end{align}
for some $|\gamma_3|\leq T$. Consider now the case
\begin{equation}\label{0case2}
4\pi c|t|y^c>\log y\,.
\end{equation}
From \eqref{0kappaT}, \eqref{0I5}, \eqref{0case2} and Lemma \ref{Isest} we deduce
\begin{align}\label{0I5est2}
I_5&\ll\left|\sum_{l\sim L}\sum_{\mu y/2L<d\leq X/L}\frac{\Lambda(l)a(d)\chi(dl)}{(dl^2)^{\varkappa+i\gamma_4}}\right|
\left(\int\limits_{0}^{\pi c\mu^c|t| y^c}\frac{y^\varkappa}{|t| y^c}\,d\gamma+\int\limits_{4\pi c|t|y^c}^{T}\frac{y^\varkappa}{\gamma}\,d\gamma\right)+|I_6|\nonumber\\
&\ll\left|\sum_{l\sim L}\sum_{\mu y/2L<d\leq X/L}\frac{\Lambda(l)a(d)\chi(dl)}{(dl^2)^{\varkappa+i\gamma_4}}\right|\log X+|I_6|\,,
\end{align}
for some $|\gamma_4|\leq T$, where
\begin{equation}\label{0I6}
I_6=\int\limits_{\pi c\mu^c|t| y^c<|\gamma|<4\pi c|t| y^c}\sum_{l\sim L}\sum_{\mu y/2L<d\leq X/L}\frac{\Lambda(l)a(d)\chi(dl)}{(dl^2)^{\varkappa+i\gamma}}
\left(\int\limits_{\mu y}^{y}x^{\varkappa-1+i\gamma}e(tx^c)\,dx\right)\,d\gamma\,.
\end{equation}
With a change of variables we can write the integral with respect to $x$ in the form
\begin{equation}\label{0Ibetagammatu}
\int\limits_{\mu y}^{y}x^{\varkappa-1+i\gamma}e(tx^c)\,dx=\frac{1}c{}\int\limits_{(\mu y)^c}^{y^c}\varphi(u)e\big(F(u)\big)\,du\,,
\end{equation}
where
\begin{equation}\label{0varphiF}
F(u)=\frac{\gamma}{2\pi c}\log u+tu\,, \quad \varphi(u)=u^{\frac{1}{c}\varkappa-1}\,.
\end{equation}
We have
\begin{equation}\label{0F'}
F'(u)=\frac{\gamma}{2\pi cu}+t\,, \quad F''(u)=-\frac{\gamma}{2\pi cu^2}\,, \quad F'''(u)=\frac{\gamma}{\pi cu^3}\,,
\end{equation}
\begin{equation}\label{0varphi'}
\varphi'(u)=\left(\frac{\varkappa}{c}-1\right)u^{\frac{1}{c}\varkappa-2}\,, \quad \varphi''(u)=\left(\frac{\varkappa}{c}-1\right)\left(\frac{\varkappa}{c}-2\right)u^{\frac{1}{c}\varkappa-3}\,.
\end{equation}
Bearing in mind that $|\gamma|\asymp|t|y^c$, $u\asymp y^c$ by \eqref{0varphiF} -- \eqref{0varphi'} we get
\begin{equation}\label{0F'est}
|t|\ll F'(u)\ll |t|\,, \quad A^{-1}\ll F''(u)\ll A^{-1}\,, \quad F'''(u)\ll A^{-1}U^{-1}\,,
\end{equation}
\begin{equation}\label{0varphi'est}
\varphi(u)\ll H\,, \quad \varphi'(u)\ll HU^{-1}\,, \quad \varphi''(u)\ll HU^{-2}\,,
\end{equation}
where
\begin{equation}\label{0HUA}
H=y^{\varkappa-c}\,, \quad U=y^c\,, \quad A=y^c|t|^{-1}\,.
\end{equation}
Put
\begin{equation}\label{0uo}
u_0=-\frac{\gamma}{2\pi ct}\,.
\end{equation}
From \eqref{0F'} it follows that
\begin{equation}\label{0F'u0}
F'(u_0)=0\,. 
\end{equation}
Put
\begin{equation}\label{0ab}
a=\pi c\mu^c|t| y^c|\,, \quad b=4\pi c|t| y^c)|\,.
\end{equation}
Now \eqref{0kappaT}, \eqref{0case2}, \eqref{0I6} --  \eqref{0F'}, \eqref{0F'est} -- \eqref{0ab}, Abel's summation formula and (\cite{Karat}, Ch. 1, \S 3, Lemma 2) yield
\begin{align*}
I_6&=\int\limits_a^b\sum_{l\sim L}\sum_{\mu y/2L<d\leq X/L}\frac{\Lambda(l)a(d)\chi(dl)}{(dl^2)^{\varkappa+i\gamma}}
\Bigg[\frac{1+i}{c\sqrt{2}}\cdot\frac{\varphi(u_0)e\big(F(u_0)\big)}{\sqrt{F''(u_0)}}\nonumber\\
&+\mathcal{O}\big(HU^{-1}A\big)+\mathcal{O}\Bigg(H\min\bigg(\frac{1}{|F'(a)|}, \sqrt{A} \bigg)\Bigg)+\mathcal{O}\Bigg(H\min\bigg(\frac{1}{|F'(b)|}, \sqrt{A} \bigg)\Bigg)\Bigg]d\gamma\nonumber\\
&\ll\left|\sum_{l\sim L}\sum_{\mu y/2L<d\leq X/L}\frac{\Lambda(l)a(d)\chi(dl)}{(dl^2)^{\varkappa+i\gamma_5}\log(dl)}\right|
+\left|\sum_{l\sim L}\sum_{\mu y/2L<d\leq X/L}\frac{\Lambda(l)a(d)\chi(dl)}{(dl^2)^{\varkappa+i\gamma_6}}\right|\nonumber\\
\end{align*}

\begin{align}\label{0I6est1}
&\ll\left|\sum_{l\sim L}\frac{\Lambda(l) \chi(l)}{l^{2\varkappa+i2\gamma_5}\log\left(\frac{lX}{L}\right)}\right|\Bigg|\sum_{\mu y/2L<d\leq X/L}\frac{a(d)\chi(d)}{d^{\varkappa+i\gamma_5}}\Bigg|\nonumber\\
&+\left|\sum_{l\sim L}\frac{\Lambda(l) \chi(l)}{l^{2\varkappa+i2\gamma_5}\log^2(z_0l)}\right|\Bigg|\sum_{\mu y/2L<d\leq z_0}\frac{a(d)\chi(d)}{d^{\varkappa+i\gamma_5}}\Bigg|\log X\nonumber\\
&+\left|\sum_{l\sim L}\sum_{\mu y/2L<d\leq X/L}\frac{\Lambda(l)a(d)\chi(dl)}{(dl^2)^{\varkappa+i\gamma_6}}\right|\,,
\end{align}
for some $|\gamma_5|, |\gamma_6|\leq T$ and for some $z_0\in[\mu y/2L, X/L]$.
Summarizing \eqref{0U4est1}, \eqref{0mathfrakX1est1}, \eqref{0mathfrakX2est1}, \eqref{0I1est1}, \eqref{0I23est1}, \eqref{0I4est1}, \eqref{0I5est1}, \eqref{0I5est2}, \eqref{0I6est1} we find
\begin{align}\label{0U4est2}
U_4(y,\chi,t)&\ll \left|\sum_{l\sim L}\frac{\Lambda(l) \chi(l)}{l^{\varkappa+i\gamma_1}}\right|
\Bigg|\sum_{\mu y/2L<d\leq X/L}\frac{a(d)\chi(d)}{d^{\varkappa+i\gamma_1}}\Bigg|\log^2X+X^{\frac{1}{4}}u^{-1}\log^2X+\log^2X\nonumber\\
&\ll\left|\sum_{l\sim L}\frac{\Lambda(l) \chi(l)}{l^{2\varkappa+i2\gamma_5}\log\left(\frac{lX}{L}\right)}\right|\Bigg|\sum_{\mu y/2L<d\leq X/L}\frac{a(d)\chi(d)}{d^{\varkappa+i\gamma_5}}\Bigg|\log X\nonumber\\
&+\left|\sum_{l\sim L}\frac{\Lambda(l) \chi(l)}{l^{2\varkappa+i2\gamma_5}\log^2(z_0l)}\right|\Bigg|\sum_{\mu y/2L<d\leq z_0}\frac{a(d)\chi(d)}{d^{\varkappa+i\gamma_5}}\Bigg|\log^2X\nonumber\\
&+\left|\sum_{l\sim L}\frac{\Lambda(l) \chi(l)}{l^{2\varkappa+i2\gamma_7}}\right|\Bigg|\sum_{\mu y/2L<d\leq X/L}\frac{a(d)\chi(d)}{d^{\varkappa+i\gamma_7}}\Bigg|\log^2X
\end{align}
for some $|\gamma_1|, |\gamma_5|, |\gamma_7|\leq T$.
Now \eqref{0Omegaj} and  \eqref{0U4est2} yield
\begin{equation}\label{0Omega24est1}
\Omega^{(4)}_2\ll \big(\Xi_1+\Xi_2+\Xi_3+\Xi_4+RX^{\frac{1}{4}} u^{-1}+R\big)\log^2X\,,
\end{equation}
where
\begin{align}
\label{0Xi1}
\Xi_1&=\sum\limits_{R_0<r\le R}\frac{1}{\varphi(r)}\sideset{}{^*}\sum\limits_{\chi(r)}
\left|\sum_{l\sim L}\frac{\Lambda(l) \chi(l)}{l^{\varkappa+i\gamma_1}}\right|\Bigg|\sum_{\mu y/l<d\leq X/L}\frac{a(d)\chi(d)}{d^{{\varkappa+i\gamma_1}}}\Bigg|\,,\\
\label{0Xi2}
\Xi_2&=\sum\limits_{R_0<r\le R}\frac{1}{\varphi(r)}\sideset{}{^*}\sum\limits_{\chi(r)}
\left|\sum_{l\sim L}\frac{\Lambda(l) \chi(l)}{l^{2\varkappa+i2\gamma_5}\log\left(\frac{lX}{L}\right)}\right|\Bigg|\sum_{\mu y/2L<d\leq X/L}\frac{a(d)\chi(d)}{d^{\varkappa+i\gamma_5}}\Bigg|\,,\\
\label{0Xi3}
\Xi_3&=\sum\limits_{R_0<r\le R}\frac{1}{\varphi(r)}\sideset{}{^*}\sum\limits_{\chi(r)}
\left|\sum_{l\sim L}\frac{\Lambda(l) \chi(l)}{l^{2\varkappa+i2\gamma_5}\log^2(z_0l)}\right|\Bigg|\sum_{\mu y/2L<d\leq z_0}\frac{a(d)\chi(d)}{d^{\varkappa+i\gamma_5}}\Bigg|\,,\\
\label{0Xi4}
\Xi_4&=\sum\limits_{R_0<r\le R}\frac{1}{\varphi(r)}\sideset{}{^*}\sum\limits_{\chi(r)}
\left|\sum_{l\sim L}\frac{\Lambda(l) \chi(l)}{l^{2\varkappa+i2\gamma_7}}\right|\Bigg|\sum_{\mu y/2L<d\leq X/L}\frac{a(d)\chi(d)}{d^{\varkappa+i\gamma_7}}\Bigg|\,.
\end{align}
By \eqref{0R0R}, \eqref{0cdad}, \eqref{0kappaT},  Cauchy's inequality, Lemma \ref{largesieve} and the well known inequalities 
\begin{equation*}
\sum\limits_{n\le X} \Lambda^2(n)\ll X\log X \,, \quad \sum\limits_{n\le X} \tau^2(n)\ll X\log^3X    
\end{equation*}
we derive
\begin{align}\label{0Xi1helpest}
&\sum\limits_{R_0<r\le R}
\frac{r}{\varphi(r)}\sideset{}{^*}\sum\limits_{\chi(r)}\left|\sum_{l\sim L}\frac{\Lambda(l) \chi(l)}{l^{\varkappa+i\gamma_1}}\right|
\Bigg|\sum_{\mu y/2L<d\leq X/L}\frac{a(d)\chi(d)}{d^{{\varkappa+i\gamma_1}}}\Bigg|\nonumber\\
&\ll\left(\sum\limits_{R_0<r\le R}\frac{r}{\varphi(r)}\sideset{}{^*}\sum\limits_{\chi(r)}
\left|\sum_{l\sim L}\frac{\Lambda(l) \chi(l)}{l^{\varkappa+i\gamma_1}}\right|^2\right)^{\frac{1}{2}}\nonumber\\
&\times\left(\sum\limits_{R_0<r\le R}\frac{r}{\varphi(r)}\sideset{}{^*}\sum\limits_{\chi(r)}
\Bigg|\sum_{\mu y/2L<d\leq X/L}\frac{a(d)\chi(d)}{d^{{\varkappa+i\gamma_1}}}\Bigg|^2\right)^{\frac{1}{2}}\nonumber\\
&\ll\big(L+ R^2\big)^{\frac{1}{2}}\left(\frac{X}{L}+ R^2\right)^{\frac{1}{2}}
\left(\sum_{l\sim L}\Lambda^2(l)\right)^{\frac{1}{2}}\left(\sum_{\mu y/2L<d\leq X/L}\tau^2(d)\right)^{\frac{1}{2}}\nonumber\\
&\ll\big(X + XRu^{-\frac{1}{2}}+X^{\frac{1}{2}}R^2\big)\log^2X\,.
\end{align}
Now \eqref{0R0R}, \eqref{0Xi1}, \eqref{0Xi1helpest} and Abel's summation formula lead to
\begin{equation}\label{0Xi1est}
\Xi_1\ll \big(XR_0^{-1} + Xu^{-\frac{1}{2}}\log X+X^{\frac{1}{2}}R\big)\log^2X\,.
\end{equation}
Proceeding in the same way for the sums \eqref{0Xi2} -- \eqref{0Xi4} we deduce
\begin{equation}\label{0Xi234est}
\Xi_2,\, \Xi_3,\, \Xi_4\ll \big(XR_0^{-1} + Xu^{-\frac{1}{2}}\log X+X^{\frac{1}{2}}R\big)\log^2X\,.
\end{equation}
Bearing in mind \eqref{0Omega24est1}, \eqref{0Xi1est} and \eqref{0Xi234est} we obtain
\begin{equation}\label{0Omega24est2}
\Omega^{(4)}_2\ll\big(XR_0^{-1} + Xu^{-\frac{1}{2}}\log X+X^{\frac{1}{2}}R\big)\log^4X\,.
\end{equation}
Arguing as with $\Omega^{(4)}_2$ we get
\begin{equation}\label{0Omega23est}
\Omega^{(3)}_2\ll\big(XR_0^{-1} + Xu^{-\frac{1}{2}}\log X+X^{\frac{1}{2}}R\big)\log^4X\,.
\end{equation}
From  \eqref{0Omega2est1}, \eqref{0Omega21est}, \eqref{0Omega22est}, \eqref{0Omega24est2} and \eqref{0Omega23est} we find
\begin{equation}\label{0Omega2est2}
\Omega_2\ll X^{\frac{1}{4}} u R^{\frac{3}{2}}\log^2X+\big(XR_0^{-1} + Xu^{-\frac{1}{2}}\log X+X^{\frac{1}{2}}R\big)\log^4X\,.
\end{equation}
Using \eqref{0Sigma''primitive}, \eqref{0Omega1est} and \eqref{0Omega2est2} we derive
\begin{equation}\label{0Sigma''primitiveest}
\Sigma''\ll X^{\frac{1}{4}} u R^{\frac{3}{2}}\log^3X+\big(XR_0^{-1} + Xu^{-\frac{1}{2}}\log X+X^{\frac{1}{2}}R\big)\log^5X+\frac{X}{\log^AX}\,.
\end{equation}
Apparently for each $B>0$, the theorem is true for $y=\log^BX$. Therefore we can choose  
\begin{equation}\label{0u}
u=(\log X)^{2A+12}
\end{equation}
Taking into account \eqref{0Sigmadecomp}, \eqref{0Sigma'est}, \eqref{0R0R}, \eqref{0Sigma''primitiveest} and \eqref{0u} we deduce
\begin{equation}\label{0Sigmaest}
\Sigma\ll\frac{X}{\log^AX}\,.
\end{equation}
Summarizing \eqref{Eytda}, \eqref{0Lambdalog}, \eqref{0Sigma} and \eqref{0Sigmaest} we establish Lemma \ref{Bomb-Vin-Dim}.

\end{proof}

Put
\begin{align}
\label{S1}
&S_1=S(t)\,,\\
\label{S2}
&S_2=S_{l,d;J}(t)\,,\\
\label{I1}
&I_1=I(t)\,,\\
\label{I2}
&I_2=\frac{I_J(t)}{\varphi(d)}\,.
\end{align}
We use the identity
\begin{equation}\label{Identity}
S^2_1S_2=I^2_1I_2+(S_2-I_2)I_1^2+S_2(S_1-I_1)I_1+S_1S_2(S_1-I_1)\,.
\end{equation}
Define
\begin{align}
\label{PhiDeltaJX}
&\Phi_{\Delta, J}(X, d)=\frac{1}{\varphi(d)}
\int\limits_{-\Delta}^{\Delta}\Theta(t)I^2(t)I_J(t)e(-Nt)\,dt\,,\\
\label{PhiJX}
&\Phi_J(X, d)=\frac{1}{\varphi(d)}\int\limits_{-\infty}^{\infty}\Theta(t)I^2(t)I_J(t)e(-Nt)\,dt\,.
\end{align}
Now  \eqref{SalphaX}, \eqref{IalphaX}, \eqref{Ild1}, \eqref{S1} -- \eqref{PhiDeltaJX},
Lemma \ref{Fourier}, Lemma \ref{SIasympt}, Lemma \ref{intLintI},
Lemma \ref{intSld} and  Cauchy's inequality imply
\begin{align*}
I^{(1)}_{l,d;J}(X)-\Phi_{\Delta, J}(X, d)
&=\int\limits_{-\Delta}^{\Delta}\Theta(t)\Bigg(S_{l,d;J}(t)-\frac{I_J(t)}{\varphi(d)}\Bigg)
I^2(t)e(-N t)\,dt\nonumber\\
&+\int\limits_{-\Delta}^{\Delta}\Theta(t)S_{l,d;J}(t)\Big(S(t)-I(t)\Big)I(t)e(-Nt)\,dt\nonumber\\
&+\int\limits_{-\Delta}^{\Delta}\Theta(t)S(t)S_{l,d;J}(t)\Big(S(t)-I(t)\Big)e(-Nt)\,dt\nonumber\\
\end{align*}
\begin{align}\label{Ild1-PhiDeltaJX}
&\ll\varepsilon  \left[ \max\limits_{|t|\leq\Delta}\bigg|S_{l,d;J}(t)-\frac{I_J(t)}{\varphi(d)}\bigg|
\int\limits_{-\Delta}^{\Delta}|I(t)|^2\,dt\right.\nonumber\\
&+\frac{X}{e^{(\log X)^{1/5}}}\Bigg(\int\limits_{-\Delta}^{\Delta}|S_{l,d;J}(t)|^2\,dt\Bigg)^{\frac{1}{2}}
\Bigg(\int\limits_{-\Delta}^{\Delta}|I(t)|^2\,dt)\Bigg)^{\frac{1}{2}}\nonumber\\
&\left.+\frac{X}{e^{(\log X)^{1/5}}}\Bigg(\int\limits_{-\Delta}^{\Delta}|S(t)|^2\,dt\Bigg)^{\frac{1}{2}}
\Bigg(\int\limits_{-\Delta}^{\Delta}|S_{l,d;J}(t)|^2\,dt\Bigg)^{\frac{1}{2}}\right]\nonumber\\
&\ll\varepsilon\Bigg(X^{2-c}(\log X)\max\limits_{|t|\leq\Delta}
\bigg|S_{l,d;J}(t)-\frac{I_J(t)}{\varphi(d)}\bigg|
+\frac{X^{3-c}}{de^{(\log X)^{1/6}}}\Bigg)\,.
\end{align}
Using  \eqref{IJalphaX}, \eqref{IalphaX} and Lemma \ref{IestTitchmarsh} we deduce
\begin{equation}\label{IalphaXest}
I_J(t)\ll \min \left(X, \,\frac{X^{1-c}}{|t|}\right)\,,  \quad
I(t)\ll \min \left(X, \,\frac{X^{1-c}}{|t|}\right)\,.
\end{equation}
From \eqref{IJalphaX}, \eqref{IalphaX}, \eqref{PhiDeltaJX}, \eqref{PhiJX}, \eqref{IalphaXest}
and Lemma \ref{Fourier} it follows
\begin{equation*}
\Phi_{\Delta, J}(X, d)-\Phi_J(X, d)\ll \frac{1}{\varphi(d)}
\int\limits_{\Delta}^{\infty} |I(t)|^2|I_J(t)| |\Theta(t)|\,dt
\ll \varepsilon\frac{ X^{3-3c}}{\varphi(d)} \int\limits_{\Delta}^{\infty} \frac{dt}{t^3}
\ll \frac{\varepsilon X^{3-3c}}{\varphi(d)\Delta^2}
\end{equation*}
and therefore
\begin{equation}\label{JXest1}
\Phi_{\Delta, J}(X, d)=\Phi_J(X, d)
+\mathcal{O}\left(\frac{\varepsilon X^{3-3c}}{\varphi(d)\Delta^2}\right)\,.
\end{equation}
Finally  \eqref{Delta}, \eqref{Ild1-PhiDeltaJX}, \eqref{JXest1} and the identity
\begin{equation*}
I^{(1)}_{l,d;J}(X)=I^{(1)}_{l,d;J}(X)-\Phi_{\Delta, J}(X, d)
+\Phi_{\Delta, J}(X, d)-\Phi_J(X, d)+\Phi_J(X, d)
\end{equation*}
yield
\begin{equation}\label{Ild1est}
I^{(1)}_{l,d;J}(X)=\Phi_J(X, d)
+\mathcal{O}\Bigg(\varepsilon X^{2-c}
(\log X)\max\limits_{|t|\leq\Delta}\bigg|S_{l,d;J}(t)-\frac{I_J(t)}{\varphi(d)}\bigg|\Bigg)
+\mathcal{O}\bigg(\frac{\varepsilon X^{3-c}}{de^{(\log X)^{1/6}}}\bigg)\Bigg)\,.
\end{equation}

\section{Upper bound  of $\mathbf{I^{(3)}_{l,d;J}(X)}$}\label{SectionIld3}
\indent

By \eqref{H}, \eqref{SldalphaX}, \eqref{SalphaX}, \eqref{Ild3} and Lemma \ref{Fourier} we find
\begin{equation}\label{Ild3est}
I^{(3)}_{l,d;J}(X)\ll
\frac{X^3\log X}{d}\int\limits_{H}^{\infty}\frac{1}{t}\bigg(\frac{k}{2\pi\delta t}\bigg)^k \,dt
=\frac{X^3\log X}{dk}\bigg(\frac{k}{2\pi\delta H}\bigg)^k
\ll\frac{1}{d}\,.
\end{equation}

\section{Upper bound of $\mathbf{\Gamma_3(X)}$}\label{SectionGamma3}
\indent

Consider the sum  $\Gamma_3(X)$.\\
Since
\begin{equation*}
\sum\limits_{d|p_1-1\atop{d\geq X/D}}\chi_4(d)=\sum\limits_{m|p_1-1\atop{m\leq (p_1-1)D/X}}
\chi_4\bigg(\frac{p_1-1}{m}\bigg)
=\sum\limits_{j=\pm1}\chi_4(j)\sum\limits_{m|p_1-1\atop{m\leq (p_1-1)D/X
\atop{\frac{p_1-1}{m}\equiv j\,(4)}}}1
\end{equation*}
then from \eqref{Gamma3} and \eqref{Ild} we obtain
\begin{equation*}
\Gamma_3(X)=\sum\limits_{m<D\atop{2|m}}\sum\limits_{j=\pm1}\chi_4(j)I_{1+jm,4m;J_m}(X)\,,
\end{equation*}
where $J_m=\big(\max\{1+mX/D,X/2\},X\big]$.
The last formula and \eqref{Ilddecomp} give us
\begin{equation}\label{Gamma3decomp}
\Gamma_3(X)=\Gamma_3^{(1)}(X)+\Gamma_3^{(2)}(X)+\Gamma_3^{(3)}(X)\,,
\end{equation}
where
\begin{equation}\label{Gamma3i}
\Gamma_3^{(i)}(X)=\sum\limits_{m<D\atop{2|m}}\sum\limits_{j=\pm1}\chi_4(j)
I_{1+jm,4m;J_m}^{(i)}(X)\,,\;\; i=1,\,2,\,3.
\end{equation}

\subsection{Estimation of $\mathbf{\Gamma_3^{(1)}(X)}$}
\indent

From \eqref{Ild1est} and \eqref{Gamma3i} we get
\begin{align}\label{Gamma31}
\Gamma_3^{(1)}(X)=\Gamma^*&
+\mathcal{O}\Big(\varepsilon X^{2-c}(\log X)\Sigma_1\Big)
+\mathcal{O}\bigg(\frac{\varepsilon X^{3-c}}{e^{(\log X)^{1/6}}}\Sigma_2\bigg)\,,
\end{align}
where
\begin{align}
\label{Gamma*}
&\Gamma^*=\sum\limits_{m<D\atop{2|m}}
\Phi_J(X, 4m)\sum\limits_{j=\pm1}\chi_4(j)\,,\\
\label{Sigma1}
&\Sigma_1=\sum\limits_{m<D\atop{2|m}}
\max\limits_{|t|\leq\Delta}\bigg|S_{1+jm,4m;J}(t)-\frac{I_J(t)}{\varphi(4m)}\bigg|\,,\\
\label{Sigma2}
&\Sigma_2=\sum\limits_{m<D}\frac{1}{4m}\,.
\end{align}
From the properties of $\chi_4(k)$ we have that
\begin{equation}\label{Gamma*est}
\Gamma^*=0\,.
\end{equation}
By \eqref{D}, \eqref{SldalphaX},  \eqref{IJalphaX}, \eqref{Sigma1} and Lemma \ref{Bomb-Vin-Dim} we deduce
\begin{equation}\label{Sigma1est}
\Sigma_1\ll\frac{X}{\log^AX}\,.
\end{equation}
It is well known that
\begin{equation}\label{Sigma2est}
\Sigma_2\ll \log X\,.
\end{equation}
Bearing in mind \eqref{Gamma31}, \eqref{Gamma*est}, \eqref{Sigma1est} and \eqref{Sigma2est} we find
\begin{equation}\label{Gamma31est}
\Gamma_3^{(1)}(X)\ll\frac{\varepsilon X^{3-c}}{\log X}\,.
\end{equation}

\subsection{Estimation of $\mathbf{\Gamma_3^{(2)}(X)}$}
\indent

Now we consider $\Gamma_3^{(2)}(X)$. From \eqref{Ild2} and \eqref{Gamma3i} we have
\begin{equation}\label{Gamma32}
\Gamma_3^{(2)}(X)=\int\limits_{\Delta\leq|t|\leq H}\Theta(t)S^2(t)K(t)e(-Nt)\,dt\,,
\end{equation}
where
\begin{equation}\label{Kt}
K(t)=\sum\limits_{m<D\atop{2|m}}\sum\limits_{j=\pm1}\chi_4(j)S_{1+jm,4m;J_m}(t)\,.
\end{equation}

\begin{lemma}\label{SIest} Assume that
\begin{equation}\label{Conditions1}
\Delta \leq |t| \leq H\,, \quad |a(m)|\ll m^\eta \,,\quad LM\asymp X\,,\quad L\gg X^{\frac{2}{5}} \,.
\end{equation}
Set
\begin{equation}\label{SI}
S_I=\sum\limits_{m\sim M}a(m)\sum\limits_{l\sim L}e(tm^cl^c)\,.
\end{equation}
Then
\begin{equation*}
S_I\ll X^{\frac{373}{400}+\eta}\,.
\end{equation*}
\end{lemma}
\begin{proof}
We first consider the case when
\begin{equation}\label{M411}
M\ll X^{\frac{4}{11}}.
\end{equation}
By \eqref{Delta}, \eqref{H}, \eqref{Conditions1}, \eqref{SI}, \eqref{M411} and Lemma \ref{Exponentpairs} with the exponent pair $\left(\frac{1}{2},\frac{1}{2}\right)$ we obtain

\begin{align}\label{SIest1}
S_I&\ll X^\eta\sum\limits_{m\sim M}\bigg|\sum\limits_{l\sim L}e(tm^cl^c)\bigg|\ll X^\eta\sum\limits_{m\sim M}\Bigg(\big(|t|X^cL^{-1}\big)^{\frac{1}{2}}L^{\frac{1}{2}}+\frac{1}{|t|X^cL^{-1}}\Bigg)\nonumber\\
&\ll X^\eta\Big(MH^{\frac{1}{2}}X^{\frac{c}{2}}+\Delta^{-1}X^{1-c}\Big)\ll X^\eta\Big(MX^{\frac{c}{2}}+X^{\frac{3}{4}}\Big)\ll X^{\frac{373}{400}+\eta}\,.
\end{align}
Next we consider the case when
\begin{equation}\label{M41135}
X^{\frac{4}{11}}\ll M\ll X^{\frac{3}{5}}\,.
\end{equation}
Using \eqref{SI}, \eqref{M41135} and Lemma \ref{Baker-Weingartnerest} we deduce
\begin{equation}\label{SIest2}
S_I\ll X^{\frac{373}{400}+\eta}\,.
\end{equation}
Bearing in mind \eqref{SIest1} and \eqref{SIest2} we establish the statement in the lemma.
\end{proof}

\begin{lemma}\label{SIIest} Assume that
\begin{equation}\label{Conditions2}
\Delta \leq |t| \leq H\,, \quad |a(m)|\ll m^\eta \,, \quad |b(l)|\ll l^\eta\,,
\quad LM\asymp X\,,\quad X^{\frac{1}{5}} \ll L\ll X^{\frac{1}{3}} \,.
\end{equation}
Set
\begin{equation}\label{SII}
S_{II}=\sum\limits_{m\sim M}a(m)\sum\limits_{l\sim L}b(l)e(tm^cl^c)\,.
\end{equation}
Then
\begin{equation*}
S_{II}\ll X^{\frac{373}{400}+\eta}\,.
\end{equation*}
\end{lemma}
\begin{proof}
We first consider the case when
\begin{equation}\label{L151960}
X^{\frac{1}{5}} \ll L\ll X^{\frac{63}{200}}\,.
\end{equation}
From \eqref{Conditions2}, \eqref{SII}, Cauchy's inequality and Lemma \ref{Squareout} with $Q=X^{\frac{1}{5}}$ it follows that
\begin{equation}\label{SIIest1}
|S_{II}|^2\ll X^\eta\Bigg(\frac{X^2}{Q}+\frac{X}{Q}\sum\limits_{1\leq q\leq Q}
\sum\limits_{l\sim L}\bigg|\sum\limits_{m\sim M}e\big(f(l, m, q)\big)\bigg|\Bigg)\,,
\end{equation}
where $f(l, m, q)=tm^c\big((l+q)^c-l^c\big)$.
Now \eqref{Delta}, \eqref{H}, \eqref{Conditions2}, \eqref{L151960}, \eqref{SIIest1} and Lemma \ref{Exponentpairs} with the exponent pair
$\left(\frac{2}{7},\frac{4}{7}\right)$ give us
\begin{align}\label{SIIest2}
S_{II}&\ll X^\eta\Bigg(\frac{X^2}{Q}+\frac{X}{Q}\sum\limits_{1\leq q\leq Q}
\sum\limits_{l\sim L}\bigg(\big(|t|qX^{c-1}\big)^{\frac{2}{7}}M^{\frac{4}{7}}+\frac{1}{|t|qX^{c-1}}\bigg)\Bigg)^{\frac{1}{2}}\nonumber\\
&\ll X^\eta\Bigg(\frac{X^2}{Q}+\frac{X}{Q}
\bigg(H^{\frac{2}{7}}X^{\frac{2(c-1)}{7}}M^{\frac{4}{7}}Q^{\frac{9}{7}}L+\Delta^{-1}X^{1-c}L\log Q\bigg)\Bigg)^{\frac{1}{2}}\nonumber\\
&\ll X^{\frac{373}{400}+\eta}\,.
\end{align}
Next we consider the case when
\begin{equation}\label{L19613}
X^{\frac{63}{200}}\ll L\ll X^{\frac{1}{3}}\,.
\end{equation}
Using \eqref{SII}, \eqref{L19613} and Lemma \ref{Sargos-Wuest} we find
\begin{equation}\label{SIIest3}
S_{II}\ll X^{\frac{373}{400}+\eta}\,.
\end{equation}
Taking into account \eqref{SIIest2} and \eqref{SIIest3} we establish the statement in the lemma.
\end{proof}

\begin{lemma}\label{SalphaXest} Let $\Delta \leq |t| \leq H$.
Then  for the exponential sum denoted by \eqref{SalphaX} we have
\begin{equation*}
S(t)\ll X^{\frac{373}{400}+\eta}\,.
\end{equation*}
\end{lemma}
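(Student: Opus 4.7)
The plan is a standard Heath-Brown decomposition of $S(t)$ into Type I and Type II sums, set up so that every piece is handled by the two preceding lemmas. First I would replace the sum over primes in \eqref{SalphaX} by a sum of the form $\sum_{n\sim X}\Lambda(n)e(tn^c)$; the contribution of prime powers $p^k$ with $k\ge 2$ is at most $O(X^{1/2}\log X)$, which is absorbed in $X^{373/400+\eta}$. Writing $F(n)=e(tn^c)$, we have $|F(n)|\le 1$, so Lemma \ref{Heath-Brown} applies provided we choose parameters $U,V,Z$ subject to $3<U<V<Z<X$, $Z-\frac12\in\mathbb{N}$, $X\gg Z^2U$, $Z\gg U^2$, $V^3\gg X$.

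The key choice is
\begin{equation*}
U=X^{1/5},\qquad V=X^{1/3},\qquad Z\asymp X^{2/5}\quad(\text{adjusted so that }Z-\tfrac12\in\mathbb{N}).
\end{equation*}
With this choice all three Heath-Brown constraints are saturated: $Z^2U\asymp X^{4/5+1/5}=X$, $U^2\asymp X^{2/5}\asymp Z$, and $V^3\asymp X$. Lemma \ref{Heath-Brown} then rewrites $S(t)$ as $O(\log^{10}X)$ sums, each either of Type I with $L\gg Z\asymp X^{2/5}$, $LM\asymp X$, $|a(m)|\ll m^\eta$, or of Type II with $X^{1/5}\ll L\ll X^{1/3}$, $LM\asymp X$, $|a(m)|\ll m^\eta$, $|b(l)|\ll l^\eta$.

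Every Type I sum now satisfies exactly the hypotheses of Lemma \ref{SIest} (the key point being $L\gg X^{2/5}$), and every Type II sum satisfies exactly the hypotheses of Lemma \ref{SIIest} (the key point being $X^{1/5}\ll L\ll X^{1/3}$). Each individual sum is therefore $\ll X^{373/400+\eta}$. Summing over the $O(\log^{10}X)$ pieces and absorbing the logarithmic factor into the arbitrary $X^\eta$ yields
\begin{equation*}
S(t)\ll X^{373/400+\eta},
\end{equation*}
as required.

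The main obstacle, and the reason the final exponent is $373/400$ rather than something better, is precisely that the three Heath-Brown constraints together with the ranges demanded by Lemmas \ref{SIest} and \ref{SIIest} force the choice $(U,V,Z)=(X^{1/5},X^{1/3},X^{2/5})$ and leave no slack: any attempt to widen the Type II range (to use more favourable estimates) would violate either $V^3\gg X$ or the lower bound $L\gg X^{2/5}$ required by the Type I lemma, and any attempt to lower the Type I threshold would conflict with $Z\gg U^2$. Once these parameters are fixed, the proof is routine.
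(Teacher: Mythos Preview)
Your proposal is correct and essentially identical to the paper's proof: the paper also passes to $S^\ast(t)=\sum_{X/2<n\le X}\Lambda(n)e(tn^c)$ via \eqref{Lambdalog2}, applies Lemma \ref{Heath-Brown} with exactly the parameters $U=X^{1/5}$, $V=X^{1/3}$, $Z=[X^{2/5}]+\tfrac12$, and then invokes Lemmas \ref{SIest} and \ref{SIIest} for the Type I and Type II pieces respectively. Your closing paragraph about the rigidity of the parameter choice is additional commentary not present in the paper, but the argument itself matches.
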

\begin{proof}
In order to prove the lemma  we will use the formula
\begin{equation}\label{Lambdalog2}
S(t)=S^\ast(t)+\mathcal{O}\Big(X^{\frac{1}{2}+\varepsilon}\Big)\,,
\end{equation}
where
\begin{equation*}
S^\ast(t)=\sum\limits_{X/2<n\leq X}\Lambda(n)e(t n^c)\,.
\end{equation*}
Let
\begin{equation*}
U=X^{\frac{1}{5}}\,,\quad V=X^{\frac{1}{3}}\,,\quad Z=\big[X^{\frac{2}{5}}\big]+\frac{1}{2}\,.
\end{equation*}
According to Lemma \ref{Heath-Brown}, the sum $S^\ast(t)$
can be decomposed into $O\Big(\log^{10}X\Big)$ sums, each of which is either of Type I
\begin{equation*}
\sum\limits_{m\sim M}a(m)\sum\limits_{l\sim L}e(tm^cl^c)\,,
\end{equation*}
where
\begin{equation*}
L \gg Z\,, \quad  LM\asymp X\,, \quad |a(m)|\ll m^\eta\,,
\end{equation*}
or of Type II
\begin{equation*}
\sum\limits_{m\sim M}a(m)\sum\limits_{l\sim L}b(l)e(tm^cl^c)\,,
\end{equation*}
where
\begin{equation*}
U \ll L \ll V\,, \quad  LM\asymp X\,, \quad |a(m)|\ll m^\eta\,,\quad |b(l)|\ll l^\eta\,.
\end{equation*}
Using Lemma \ref{SIest} and  Lemma \ref{SIIest} we deduce
\begin{equation}\label{Sast}
S^\ast(t)\ll X^{\frac{373}{400}+\eta}\,.
\end{equation}
Bearing in mind \eqref{Lambdalog2} and  \eqref{Sast} we establish the statement in the lemma.
\end{proof}

\begin{lemma}\label{IntK2}
For the sum denoted by \eqref{Kt} we have
\begin{equation*}
\int\limits_{\Delta}^{H}|K(t)|^2|\Theta(t)|\,dt\ll X\log^7 X\,.
\end{equation*}
\end{lemma}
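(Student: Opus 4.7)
My plan is first to rewrite $K(t)$ as a single exponential sum over primes in $(X/2,X]$. Using the characterization $p\equiv 1+jm\,(4m)\Longleftrightarrow m\mid p-1$ with $(p-1)/m\equiv j\,(4)$, together with $\sum_{j=\pm1}\chi_4(j)\mathbf{1}[(p-1)/m\equiv j\,(4)]=\chi_4((p-1)/m)$, swapping the order of summation yields
\begin{equation*}
K(t)=\sum\limits_{X/2<p\le X}c(p)\log p\cdot e(tp^c)\,,\qquad c(p)=\sum\limits_{\substack{m\mid p-1,\,2\mid m\\ m<D,\,p\in J_m}}\chi_4\bigl((p-1)/m\bigr)\,,
\end{equation*}
with the trivial bound $|c(p)|\le\tau(p-1)$.

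Next I would estimate $\int_n^{n+1}|K(t)|^2\,dt$ uniformly in $n\in\mathbb{Z}$ by expanding the square and separating diagonal from off-diagonal contributions. The diagonal $p_1=p_2$ yields $\sum_p c(p)^2\log^2 p\ll(\log X)^2\sum_{p\le X}\tau(p-1)^2\ll X\log^5 X$, by the standard moment bound $\sum_{p\le X}\tau(p-1)^2\ll X\log^3 X$. For $p_1\ne p_2$ the mean value theorem gives $|p_1^c-p_2^c|\asymp X^{c-1}|p_1-p_2|\gg X^{c-1}$ since $c>1$; combined with $|\int_n^{n+1}e(ts)\,dt|\ll 1/|s|$, the trivial estimate $|c(p)|\ll X^\eta$, and $\sum_{p_1\ne p_2\le X}|p_1-p_2|^{-1}\ll X$, the off-diagonal contributes $\ll X^{2-c+\eta}\log^2 X=o(X)$. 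Hence
\begin{equation*}
\int\limits_n^{n+1}|K(t)|^2\,dt\ll X\log^5 X
\end{equation*}
uniformly in $n$.

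Finally I would split $[\Delta,H]=[\Delta,1]\cup[1,H]$, which is valid since $\Delta\ll 1\ll H$. On $[\Delta,1]$, Lemma \ref{Fourier} gives $|\Theta(t)|\le 2a\ll\varepsilon$, so
\begin{equation*}
\int\limits_\Delta^1|K(t)|^2|\Theta(t)|\,dt\ll\varepsilon\int\limits_0^1|K(t)|^2\,dt\ll\varepsilon X\log^5 X\,.
\end{equation*}
On $[1,H]$, $|\Theta(t)|\le 1/(\pi t)$, and summing the unit-interval estimate gives
\begin{equation*}
\int\limits_1^H|K(t)|^2|\Theta(t)|\,dt\ll\sum\limits_{1\le n<H}\frac{1}{n}\int\limits_n^{n+1}|K(t)|^2\,dt\ll X\log^5 X\cdot\log H\ll X\log^6 X\,,
\end{equation*}
since $\log H\ll\log\log X$. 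Combining both pieces yields the claim $\ll X\log^7 X$, with room to spare. The main obstacle is the diagonal bound, which reduces via $|c(p)|\le\tau(p-1)$ to the standard second moment $\sum_{p\le X}\tau(p-1)^2\ll X\log^3 X$; the off-diagonal is harmless because the frequencies $p^c$ are well separated for $c>1$, the same mechanism exploited in Lemma \ref{intLintI}(iii).
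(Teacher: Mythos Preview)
Your argument is correct and follows essentially the same route as the paper. Both proofs expand $|K(t)|^2$, bound the resulting coefficients by $\tau(\cdot)$, and exploit the spacing $|p_1^c-p_2^c|\gg X^{c-1}|p_1-p_2|$ together with $\sum_{n\le X}\tau(n)^2\ll X\log^3 X$; the only cosmetic differences are that you first collapse $K(t)$ to a single sum $\sum_p c(p)\log p\,e(tp^c)$ before squaring (the paper keeps the double sum over $(m_1,m_2,j_1,j_2)$ and bounds $\sum_{m\mid n-1}1\le\tau(n-1)$ afterwards), and you split the $t$-range at $1$ rather than at $1/\varepsilon$.
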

\begin{proof}
By Lemma \ref{Fourier} we get
\begin{align}\label{IntK2est1}
\int\limits_{\Delta}^{H}|K(t)|^2|\Theta(t)|\,dt
&\ll\varepsilon\int\limits_{\Delta}^{1/\varepsilon}|K(t)|^2dt
+\int\limits_{1/\varepsilon}^{H}\frac{|K(t)|^2}{t}\,dt\nonumber\\
&\ll\varepsilon\sum\limits_{0\leq n\leq1/\varepsilon}\int\limits_{n}^{n+1}|K(t)|^2\,dt
+\sum\limits_{1/\varepsilon-1\leq n\leq H}\frac{1}{n}\int\limits_{n}^{n+1}|K(t)|^2\,dt\,.
\end{align}
On the other hand  \eqref{SldalphaX} and \eqref{Kt} yield

\begin{align}\label{IntK1}
\int\limits_{n}^{n+1}|K(t)|^2\,dt&=
\sum\limits_{m_1,m_2<D\atop{2\mid m_1,2\mid m_2}}\sum\limits_{j_1=\pm1\atop{j_2=\pm1}}\chi_4(j_1)\chi_4(j_2)\nonumber\\
&\times\int\limits_{n}^{n+1}S_{ 1+j_1m_1, 4m_1;J_{m_1}}(t)S_{ 1+j_2m_2, 4m_2;J_{m_2}}(-t)\,dt\nonumber\\
&=\sum\limits_{m_1,m_2<D\atop{2\mid m_1,2\mid m_2}}\sum\limits_{j_1=\pm1\atop{j_2=\pm1}}\chi_4(j_1)\chi_4(j_2)\nonumber\\
&\times\sum\limits_{p_i\in J_{m_i},i=1,2\atop{p_i\equiv 1+j_im_i\, (4m_i), i=1,2}}
\log p_1\log p_2\int\limits_{n}^{n+1}e\big((p_1^c-p_2^c)t\big)\,dt\nonumber\\
&\ll\sum\limits_{m_i\leq D\atop{i=1,2}}
\sum\limits_{X/2<p_1,p_2\leq X\atop{p_i\equiv 1+j_im_i\,(4m_i), i=1,2}}\log p_1\log p_2
\min\bigg(1,\frac{1}{|p_1^c-p_2^c|}\bigg)\nonumber\\
&\ll(\log X)^2\sum\limits_{m_i\leq D\atop{i=1,2}}
\sum\limits_{X/2<n_1,n_2\leq X\atop{n_i\equiv 1+j_im_i\,(4m_i), i=1,2}}
\min\bigg(1,\frac{1}{|n_1^c-n_2^c|}\bigg)\nonumber\\
&=(\log X)^2\sum\limits_{X/2<n_1,n_2\leq X}
\min\bigg(1,\frac{1}{|n_1^c-n_2^c|}\bigg)
\sum\limits_{m_1\leq D\atop{4m_1|n_1-1-j_1m_1}}1
\sum\limits_{m_2\leq D\atop{4m_2|n_2-1-j_2m_2}}1\nonumber\\
&\ll(\log X)^2\sum\limits_{X/2<n_1,n_2\leq X}
\min\bigg(1,\frac{1}{|n_1^c-n_2^c|}\bigg)
\tau(n_1-1)\tau(n_2-1)\nonumber\\
&\ll(\log X)^2\sum\limits_{X/2<n_1,n_2\leq X}
\tau^2(n_1-1)\min\bigg(1,\frac{1}{|n_1^c-n_2^c|}\bigg)\nonumber\\
&\ll(\mathfrak{S}_1+\mathfrak{S}_2)\log^2X\,,
\end{align}
where
\begin{equation*}
\mathfrak{S}_1=\sum\limits_{X/2<n_1,n_2\leq X\atop{|n_1^c-n_2^c|\leq1}}\tau^2(n_1-1)\,,\quad
\mathfrak{S}_2=\sum\limits_{X/2<n_1,n_2\leq X\atop{|n_1^c-n_2^c|>1}}\frac{\tau^2(n_1-1)}{|n_1^c-n_2^c|}\,.
\end{equation*}
First we shall estimate $\mathfrak{S}_1$.
By the mean-value theorem and \eqref{Lambdatauest2} we find
\begin{align}\label{mathfrakS1est}
\mathfrak{S}_1&=\sum\limits_{X/2<n_1\leq X}\tau^2(n_1-1)
\sum\limits_{X/2<n_2\leq X\atop{{(n_1^c-1)^{1/c}\leq n_2\leq(n_1^c+1)^{1/c}}}}1\nonumber\\
&\ll\sum\limits_{X/2<n\leq X}\tau^2(n-1)\big((n^c+1)^{1/c}-(n^c-1)^{1/c}+1\big)\nonumber\\
&\ll\sum\limits_{X/2<n\leq X}\tau^2(n-1)\big(X^{1-c}+1\big)\nonumber\\
&\ll\sum\limits_{X/2<n\leq X}\tau^2(n-1)\nonumber\\
&\ll X\log^3X\,.
\end{align}
Now we consider $\mathfrak{S}_2$. We have
\begin{align}\label{mathfrakS2est1}
\mathfrak{S}_2&\ll\sum\limits_l\sum\limits_{X/2<n_1,n_2\leq X
\atop{l<|n_1^c-n_2^c|\leq2l}}\frac{\tau^2(n_1-1)}{|n_1^c-n_2^c|}\nonumber\\
&\ll\sum\limits_l\frac{1}{l}\sum\limits_{X/2<n_1\leq X}
\tau^2(n_1-1)U(n_1,l)\,,
\end{align}
where
\begin{equation}\label{llogX}
l\ll\log X
\end{equation}
and
\begin{equation*}
U(n_1,l)=\sum\limits_{X/2<n_2\leq X\atop{(n_1^c+l)^{1/c}\leq n_2\leq (n_1^c+2l)^{1/c}}}1\,.
\end{equation*}
By the mean-value theorem and \eqref{llogX} we deduce
\begin{equation}\label{Un1l}
U(n_1,l)\ll(n_1^c+2l)^{1/c}-(n_1^c+l)^{1/c}+1\ll lX^{1-c}+1\ll1\,.
\end{equation}
Bearing in mind \eqref{Lambdatauest2} and \eqref{mathfrakS2est1} -- \eqref{Un1l} we obtain
\begin{equation}\label{mathfrakS2est2}
\mathfrak{S}_2\ll(\log X)\sum\limits_{X/2<n\leq X}\tau^2(n-1)\ll X\log^4X\,.
\end{equation}
The assertion in the lemma  follows from \eqref{IntK2est1}, \eqref{IntK1}, \eqref{mathfrakS1est}) and \eqref{mathfrakS2est2}.
\end{proof}
Using \eqref{Gamma32} and Cauchy's inequality we write
\begin{equation}\label{Gamma32est1}
\Gamma_3^{(2)}(X)\ll\max\limits_{\Delta\leq t\leq H}|S(t)|
\left(\int\limits_{\Delta}^{H}|S(t)|^2|\Theta(t)|\,dt\right)^{\frac{1}{2}}
\left(\int\limits_{\Delta}^{H}|K(t)|^2|\Theta(t)|\,dt\right)^{\frac{1}{2}}\,.
\end{equation}
According to Lemma \ref{Fourier} and Lemma \ref{intLintI} (iii)  we find
\begin{align}\label{IntS2Thetaest}
\int\limits_{\Delta}^{H}|S(t)|^2|\Theta(t)|\,dt
&\ll\varepsilon\int\limits_{\Delta}^{1/\varepsilon}|S(t)|^2dt
+\int\limits_{1/\varepsilon}^{H}\frac{|S(t)|^2}{t}\,dt\nonumber\\
&\ll\varepsilon\sum\limits_{0\leq n\leq1/\varepsilon}\int\limits_{n}^{n+1}|S(t)|^2\,dt
+\sum\limits_{1/\varepsilon-1\leq n\leq H}\frac{1}{n}\int\limits_{n}^{n+1}|S(t)|^2\,dt\nonumber\\
&\ll X\log^4X\,.
\end{align}
Finally \eqref{varepsilon}, \eqref{Gamma32est1},  \eqref{IntS2Thetaest},
Lemma \ref{SalphaXest} and Lemma \ref{IntK2} imply
\begin{equation}\label{Gamma32est2}
\Gamma_3^{(2)}(X)\ll\frac{\varepsilon X^{3-c}}{\log X}\,.
\end{equation}

\subsection{Estimation of $\mathbf{\Gamma_3^{(3)}(X)}$}
\indent

From \eqref{Ild3est} and \eqref{Gamma3i} we have
\begin{equation}\label{Gamma33est}
\Gamma_3^{(3)}(X)\ll\sum\limits_{m<D}\frac{1}{d}\ll \log X\,.
\end{equation}

\subsection{Estimation of $\mathbf{\Gamma_3(X)}$}
\indent

Summarizing \eqref{Gamma3decomp}, \eqref{Gamma31est}, \eqref{Gamma32est2} and \eqref{Gamma33est} we get
\begin{equation}\label{Gamm3est}
\Gamma_3(X)\ll\frac{\varepsilon X^{3-c}}{\log X}\,.
\end{equation}

\section{Upper bound of $\mathbf{\Gamma_2(X)}$}\label{SectionGamma2}
\indent

In this section we need a lemma that gives us information about the upper bound
of the number of solutions of the binary Piatetski-Shapiro inequality.
\begin{lemma}\label{Thenumberofsolutions}
Let $1<c<3$, $c\neq2$  and $N_0$ is a sufficiently large positive number.
Then for the number of solutions $B_0(N_0)$ of the  diophantine inequality
\begin{equation}\label{Binary}
|p_1^c+p_2^c-N_0|<\varepsilon
\end{equation}
in prime numbers $p_1,\,p_2 \in \left(N_0^{\frac{1}{c}}/2\,,\, N_0^{\frac{1}{c}}\right]$ we have that
\begin{equation*}
B_0(N_0)\ll \frac{\varepsilon N_0^{\frac{2}{c}-1}}{\log^2N_0}\,.
\end{equation*}
\end{lemma}
\begin{proof}
Define
\begin{equation}\label{BX0}
B(X_0)=\sum\limits_{X_0/2<p_1,p_2\leq X_0\atop{|p_1^c+p_2^c-N_0|<\varepsilon}}
\log p_1\log p_2\,,
\end{equation}
where
\begin{equation}\label{X0}
X_0=N_0^{\frac{1}{c}}\,.
\end{equation}
Let us take  the parameters
\begin{equation*}
a_0 = \frac{5\varepsilon}{4}\,,\quad \delta_0=\frac{\varepsilon}{4}\,,\quad  k_0=[\log X_0]\,.
\end{equation*}
According to Lemma \ref{Fourier} there exists a function $\theta_0(y)$
which is $k_0$ times continuously differentiable and such that
\begin{align*}
&\theta_0(y)=1\quad\quad\quad\mbox{for }\quad\;\; |y|\leq \varepsilon\,;\\
&0<\theta_0(y)<1\quad\,\mbox{for}\quad\quad  \varepsilon <|y|< \frac{3\varepsilon}{2}\,;\\
&\theta_0(y)=0\quad\quad\quad\mbox{for}\quad\quad|y|\geq \frac{3\varepsilon}{2}
\end{align*}
and its Fourier transform
\begin{equation*}
\Theta_0(x)=\int\limits_{-\infty}^{\infty}\theta_0(y)e(-xy)dy
\end{equation*}
satisfies the inequality
\begin{equation}\label{Theta0est}
|\Theta_0(x)|\leq\min\Bigg(\frac{5\varepsilon}{2},\frac{1}{\pi|x|},\frac{1}{\pi |x|}
\bigg(\frac{2k_0}{\pi |x|\varepsilon}\bigg)^{k_0}\Bigg)\,.
\end{equation}
By \eqref{BX0}, the definition of $\theta_0(y)$ and  the inverse Fourier
transformation formula we use decomposition over major, minor and trivial arcs as follows
\begin{align}\label{BX0est1}
B(X_0)&\leq\sum\limits_{X_0/2<p_1,p_2\leq X_0}
\theta_0\big(p_1^c+p_2^c-N_0\big)\log p_1\log p_2\nonumber\\
&=\int\limits_{-\infty}^{\infty}\Theta_0(t) S_0^2(t) e(-N_0t)\,dt\nonumber\\
&=B_1(X_0)+B_2(X_0)+B_3(X_0)\,,
\end{align}
where
\begin{align}
\label{S0t}
&S_0(t)=\sum\limits_{X_0/2<p\leq X_0} e(t p^c)\log p\,,\\
\label{Delta0}
&\Delta_0=\frac{(\log X_0)^{A_0}}{X_0^c}\,,\quad A_0>10\,,\\
\label{B1}
&B_1(X_0)=\int\limits_{-\Delta_0}^{\Delta_0}\Theta_0(t)S_0^2(t)e(-N_0t)\,dt\,,\\
\label{B2}
&B_2(X_0)=\int\limits_{\Delta_0\leq|t|\leq H}\Theta_0(t)S_0^2(t)e(-N_0t)\,dt\,,\\
\label{B3}
&B_3(X_0)=\int\limits_{|t|>H}\Theta_0(t)S_0^2(t)e(-N_0 t)\,dt\,.
\end{align}
First we estimate $B_1(X_0)$.
Put
\begin{align}
\label{I0t}
&I_0(t)=\int\limits_{X_0/2}^{X_0}e(t y^c)\,dy\,,\\
\label{PsiDeltaX}
&\Psi_{\Delta_0}(X_0)=\int\limits_{-\Delta_0}^{\Delta_0}\Theta_0(t)I_0^2(t)e(-N_0t)\,dt\,,\\
\label{PsiX}
&\Psi(X_0)=\int\limits_{-\infty}^{\infty}\Theta_0(t)I_0^2(t)e(-N_0t)\,dt\,.
\end{align}
Using \eqref{Theta0est}, \eqref{I0t}, \eqref{PsiX} and Lemma \ref{IestTitchmarsh} we find
\begin{align}\label{Psiest}
\Psi(X_0)&=\int\limits_{-X_0^{-c}}^{X_0^{-c}}\Theta_0(t)I_0^2(t)e(-N_0t)\,dt
+\int\limits_{|t|>X_0^{-c}}\Theta_0(t)I_0^2(t)e(-N_0t)\,dt\nonumber\\
&\ll\int\limits_{-X_0^{-c}}^{X_0^{-c}}\varepsilon X^2_0\,dt
+\int\limits_{X_0^{-c}}^{\infty}\varepsilon \left(\frac{X_0^{1-c}}{t}\right)^2\,dt\nonumber\\
&\ll\varepsilon X_0^{2-c}\,.
\end{align}
On the other hand  \eqref{Theta0est}, \eqref{Delta0}, \eqref{B1}, \eqref{PsiDeltaX},
Lemma \ref{SIasympt} and the trivial estimations
\begin{equation}\label{S0I0trivest}
S_0(t)\ll X_0 \,, \quad I_0(t)\ll X_0
\end{equation}
give us
\begin{align}\label{B1PsiDelta}
B_1(X_0)-\Psi_{\Delta_0}(X_0)
&\ll\int\limits_{-\Delta_0}^{\Delta_0}|S_0^2(t)-I_0^2(t)||\Theta_0(t)|\,dt\nonumber\\
&\ll\varepsilon\int\limits_{-\Delta_0}^{\Delta_0}
\big|S_0(t)-I_0(t)\big|\Big(|S_0(t)|+|I_0(t)|\Big)\,dt\nonumber\\
&\ll\varepsilon \frac{X_0}{e^{(\log X_0)^{1/5}}}
\left(\int\limits_{-\Delta_0}^{\Delta_0}|S_0(t)|\,dt
+\int\limits_{-\Delta_0}^{\Delta_0}|I_0(t)|\,dt\right)\nonumber\\
&\ll\frac{\varepsilon X_0^{2-c}}{e^{(\log X_0)^{1/6}}}\,.
\end{align}
From \eqref{Theta0est}, \eqref{Delta0}, \eqref{PsiDeltaX}, \eqref{PsiX} and Lemma \ref{IestTitchmarsh} we  deduce
\begin{align}\label{PsiPsiDelta}
|\Psi(X_0)-\Psi_{\Delta_0}(X_0)|&\ll\int\limits_{\Delta_0}^{\infty}|I_0(t)|^2|\Theta_0(t)|\,dt
\ll\frac{\varepsilon}{X_0^{2(c-1)}}\int\limits_{\Delta_0}^{\infty}\frac{dt}{t^2}\nonumber\\
&\ll \frac{\varepsilon}{X_0^{2(c-1)}\Delta_0}\ll\frac{\varepsilon X_0^{2-c}}{\log X_0}\,.
\end{align}
Now \eqref{Psiest}, \eqref{B1PsiDelta} and \eqref{PsiPsiDelta} and the identity
\begin{equation*}
B_1(X_0)=B_1(X_0)-\Psi_{\Delta_0}(X_0)+\Psi_{\Delta_0}(X_0)-\Psi(X_0)+\Psi(X_0)
\end{equation*}
imply
\begin{equation}\label{B1X0est}
B_1(X_0)\ll \varepsilon X_0^{2-c}\,.
\end{equation}
Further we estimate $B_2(X_0)$. Consider the integral
\begin{equation}\label{B2'}
B^\ast_2(X_0)=\int\limits_{\Delta_0}^{H}\Theta_0(t)S_0^2(t)e(-N_0t)\,dt\,.
\end{equation}
By \eqref{X0}, \eqref{Theta0est}, \eqref{S0I0trivest}, \eqref{B2'} and partial integration it follows
\begin{align}\label{B2'est}
B^\ast_2(X_0)&=-\frac{1}{2\pi i}\int\limits_{\Delta_0}^{H}\frac{\Theta_0(t)S_0^2(t)}{N_0}\,d\,e(-N_0t)\nonumber\\
&=-\frac{\Theta_0(t)S_0^2(t)e(-N_0t)}{2\pi iN_0}\Bigg|_{\Delta_0}^{H}
+\frac{1}{2\pi iN_0}\int\limits_{\Delta_0}^{H}e(-N_0t)\,d\Big(\Theta_0(t)S_0^2(t)\Big)\nonumber\\
&\ll\varepsilon X_0^{2-c}+X_0^{-c}|\Omega|\,,
\end{align}
where
\begin{equation}\label{Omega}
\Omega=\int\limits_{\Delta_0}^{H}e(-N_0t)\,d\Big(\Theta_0(t)S_0^2(t)\Big)\,.
\end{equation}
Next we consider $\Omega$. Put
\begin{equation}\label{Gammat}
\Gamma_0 \, :\, z=g(t)=\Theta_0(t)S_0^2(t)\,,\quad g'(t)\neq0\,,\quad \Delta_0\leq t\leq H\,.
\end{equation}
Since $g(t)$ is a holomorphic function such that $g'(t)\neq0$ for $t\in [\Delta_0, H]$, then
there exists $g^{-1}(z)$  for $z\in \Gamma_0$.
Thus \eqref{Omega} and \eqref{Gammat} imply
\begin{equation}\label{Omegaest1}
\Omega=\int\limits_{\Gamma_0} e\Big(-N_0g^{-1}(z)\Big)\,dz\,.
\end{equation}
Using \eqref{Theta0est}, \eqref{S0I0trivest}, \eqref{Gammat} and that the integral \eqref{Omegaest1} is independent of path we obtain
\begin{equation}\label{Omegaest2}
\Omega=\int\limits_{\overline{\Gamma}_0} e\Big(-N_0g^{-1}(z)\Big)\,dz\ll\int\limits_{\overline{\Gamma}_0} |dz|
\ll |g(\Delta_0)|+|g(H)| \ll \varepsilon X_0^2\,,
\end{equation}
where $\overline{\Gamma}_0$ is the line segment connecting the points $g(\Delta_0)$ and $g(H)$.
Taking into account \eqref{B2}, \eqref{B2'}, \eqref{B2'est} and \eqref{Omegaest2} we deduce
\begin{equation}\label{B2X0est}
B_2(X_0)\ll \varepsilon X_0^{2-c}\,.
\end{equation}
Finally we estimate $B_3(X_0)$.
By \eqref{H}, \eqref{Theta0est}, \eqref{S0t}, \eqref{B3}, \eqref{S0I0trivest} we find
\begin{equation}\label{B3X0est}
B_3(X_0)\ll X_0^2\int\limits_{H}^{\infty}\frac{1}{t}\bigg(\frac{2k_0}{\pi t\varepsilon}\bigg)^{k_0} \,dt
\ll X_0^2\bigg(\frac{{k_0}}{\pi H\varepsilon}\bigg)^{k_0}\ll1\,.
\end{equation}
Summarizing \eqref{BX0est1}, \eqref{B1X0est}, \eqref{B2X0est} and \eqref{B3X0est} we get
\begin{equation}\label{BX0est}
B(X_0)\ll \varepsilon X_0^{2-c}\,.
\end{equation}
Bearing in mind \eqref{BX0}, \eqref{X0} and \eqref{BX0est}, for the number of solutions $B_0(N_0)$
of the  diophantine inequality \eqref{Binary} we obtain
\begin{equation*}
B_0(N_0)\ll \label{B1Psitau}\frac{\varepsilon N_0^{\frac{2}{c}-1}}{\log^2N_0}\,.
\end{equation*}
The lemma is proved.
\end{proof}
Consider the sum $\Gamma_2(X)$. We denote by $\mathcal{F}(X)$ the set of all primes
$X/2<p\leq X$ such that $p-1$ has a divisor belonging to the interval $(D,X/D)$.
The inequality $xy\leq x^2+y^2$ and  \eqref{Gamma2} give us
\begin{align*}
\Gamma_2(X)^2&\ll(\log X)^6\sum\limits_{X/2<p_1,...,p_6\leq X
\atop{|p_1^c+p_2^c+p_3^c-N|<\varepsilon
\atop{|p_4^c+p_5^c+p_6^c-N|<\varepsilon}}}
\left|\sum\limits_{d|p_1-1\atop{D<d<X/D}}\chi_4(d)\right|
\left|\sum\limits_{t|p_4-1\atop{D<t<X/D}}\chi_4(t)\right|\\
&\ll(\log X)^6\sum\limits_{X/2<p_1,...,p_6\leq X
\atop{|p_1^c+p_2^c+p_3^c-N|<\varepsilon
\atop{|p_4^c+p_5^c+p_6^c-N|<\varepsilon
\atop{p_4\in\mathcal{F}(X)}}}}\left|\sum\limits_{d|p_1-1\atop{D<d<X/D}}\chi_4(d)\right|^2\,.
\end{align*}
The summands in the last sum for which $p_1=p_4$ can be estimated with
$\mathcal{O}\big(X^{3+\varepsilon}\big)$.\\
Therefore
\begin{equation}\label{Gamma2est1}
\Gamma_2(X)^2\ll(\log X)^6\Sigma_0+X^{3+\varepsilon}\,,
\end{equation}
where
\begin{equation}\label{Sigma0}
\Sigma_0=\sum\limits_{X/2<p_1\leq X}
\left|\sum\limits_{d|p_1-1\atop{D<d<X/D}}\chi_4(d)\right|^2
\sum\limits_{X/2<p_4\leq X\atop{p_4\in\mathcal{F}(X)
\atop{p_4\neq p_1}}}\sum\limits_{X/2<p_2, p_3, p_5, p_6\leq X
\atop{|p_1^c+p_2^c+p_3^c-N|<\varepsilon
\atop{|p_4^c+p_5^c+p_6^c-N|<\varepsilon}}}1\,.
\end{equation}
Now  \eqref{Sigma0} and Lemma \ref{Thenumberofsolutions}  imply
\begin{equation}\label{Sigma0est}
\Sigma_0\ll \frac{X^{4-2c}}{\log^4X}\,\Sigma'_0\,\Sigma''_0\,,
\end{equation}
where
\begin{equation*}
\Sigma'_0=\sum\limits_{X/2<p\leq X}\left|\sum\limits_{d|p-1\atop{D<d<X/D}}\chi_4(d)\right|^2\,,
\quad \Sigma''_0=\sum\limits_{X/2<p\leq X\atop{p\in\mathcal{F}(X)}}1\,.
\end{equation*}
Applying Lemma \ref{Hooley1} we get
\begin{equation}\label{Sigma0'est}
\Sigma'_0\ll\frac{X(\log\log X)^7}{\log X}\,.
\end{equation}
Using Lemma \ref{Hooley2} we obtain
\begin{equation}\label{Sigma0''est}
\Sigma''_0\ll\frac{X(\log\log X)^3}{(\log X)^{1+2\theta_0}}\,,
\end{equation}
where $\theta_0$ is denoted by  \eqref{theta0}.

Finally \eqref{Gamma2est1}, \eqref{Sigma0est},
\eqref{Sigma0'est} and \eqref{Sigma0''est} yield
\begin{equation}\label{Gamma2est2}
\Gamma_2(X)\ll\frac{ X^{3-c}(\log\log X)^5}{(\log X)^{\theta_0}}
=\frac{\varepsilon X^{3-c}}{\log\log X}\,.
\end{equation}

\section{Lower bound  for $\mathbf{\Gamma_1(X)}$}\label{SectionGamma1}
\indent

Consider the sum $\Gamma_1(X)$.
From \eqref{Gamma1}, \eqref{Ild} and \eqref{Ilddecomp} it follows
\begin{equation}\label{Gamma1decomp}
\Gamma_1(X)=\Gamma_1^{(1)}(X)+\Gamma_1^{(2)}(X)+\Gamma_1^{(3)}(X)\,,
\end{equation}
where
\begin{equation}\label{Gamma1i}
\Gamma_1^{(i)}(X)=\sum\limits_{d\leq D}\chi_4(d)I_{1,d}^{(i)}(X)\,,\;\; i=1,\,2,\,3.
\end{equation}

\subsection{Estimation of $\mathbf{\Gamma_1^{(1)}(X)}$}
\indent

First we consider $\Gamma_1^{(1)}(X)$.
Using formula \eqref{Ild1est} for $J=(X/2,X]$, \eqref{Gamma1i}
and treating the reminder term by the same way as for $\Gamma_3^{(1)}(X)$
we find
\begin{equation} \label{Gamma11est1}
\Gamma_1^{(1)}(X)=\Phi(X)\sum\limits_{d\leq D}\frac{\chi_4(d)}{\varphi(d)}
+\mathcal{O}\bigg(\frac{\varepsilon X^{3-c}}{\log X}\bigg)\,,
\end{equation}
where
\begin{equation*}
\Phi(X)=\int\limits_{-\infty}^{\infty}\Theta(t)I^3(t)e(-Nt)\,dt\,.
\end{equation*}
By  Lemma \ref{IIIest}  we get
\begin{equation}\label{Philowerbound}
\Phi(X)\gg\varepsilon X^{3-c}\,.
\end{equation}
According to (\cite{Dimitrov2}, p. 14 -- 15) we have
\begin{equation}\label{sumchiphi}
\sum\limits_{d\leq D}\frac{\chi_4(d)}{\varphi(d)}=
\frac{\pi}{4}\prod\limits_p \left(1+\frac{\chi_4(p)}{p(p-1)}\right)+\mathcal{O}\Big(X^{-1/20}\Big)\,.
\end{equation}
From \eqref{Gamma11est1} and \eqref{sumchiphi} we obtain
\begin{equation}\label{Gamma11est2}
\Gamma_1^{(1)}(X)=\frac{\pi}{4}\prod\limits_p \left(1+\frac{\chi_4(p)}{p(p-1)}\right) \Phi(X)
+\mathcal{O}\bigg(\frac{\varepsilon X^{3-c}}{\log X}\bigg)+\mathcal{O}\Big(\Phi(X)X^{-1/20}\Big)\,.
\end{equation}
Now \eqref{Philowerbound} and \eqref{Gamma11est2} imply
\begin{equation}\label{Gamma11est3}
\Gamma_1^{(1)}(X)\gg\varepsilon X^{3-c}\,.
\end{equation}

\subsection{Estimation of $\mathbf{\Gamma_1^{(2)}(X)}$}
\indent

Arguing as in the estimation of $\Gamma_3^{(2)}(X)$ we find
\begin{equation} \label{Gamma12est}
\Gamma_1^{(2)}(X)\ll\frac{\varepsilon X^{3-c}}{\log X}\,.
\end{equation}

\subsection{Estimation of $\mathbf{\Gamma_1^{(3)}(X)}$}
\indent

By \eqref{Ild3est} and \eqref{Gamma1i} it follows that
\begin{equation}\label{Gamma13est}
\Gamma_1^{(3)}(X)\ll\sum\limits_{m<D}\frac{1}{d}\ll \log X\,.
\end{equation}

\subsection{Estimation of $\mathbf{\Gamma_1(X)}$}
\indent

Summarizing  \eqref{Gamma1decomp}, \eqref{Gamma11est3}, \eqref{Gamma12est} and \eqref{Gamma13est} we obtain
\begin{equation} \label{Gamma1est}
\Gamma_1(X)\gg\varepsilon X^{3-c}\,.
\end{equation}

\section{Proof of the Theorem}\label{Sectionfinal}
\indent

Taking into account \eqref{varepsilon}, \eqref{GammaGamma0}, \eqref{Gamma0decomp},
\eqref{Gamm3est}, \eqref{Gamma2est2} and \eqref{Gamma1est} we deduce
\begin{equation*}
\Gamma(X)\gg\varepsilon X^{3-c}=\frac{X^{3-c}(\log\log X)^6}{(\log X)^{\theta_0}}\,.
\end{equation*}
The last lower bound yields
\begin{equation}\label{Lowerbound}
\Gamma(X) \rightarrow\infty \quad \mbox{ as } \quad X\rightarrow\infty\,.
\end{equation}
Bearing in mind  \eqref{Gamma} and \eqref{Lowerbound} we establish Theorem \ref{Theorem}.

\vskip20pt
\footnotesize
\begin{flushleft}
S. I. Dimitrov\\
Faculty of Applied Mathematics and Informatics\\
Technical University of Sofia \\
8, St.Kliment Ohridski Blvd. \\
1756 Sofia, BULGARIA\\
e-mail: sdimitrov@tu-sofia.bg\\
\end{flushleft}

\end{document}